\newlength{\standardunitlength}
\newcommand{\bea}{\begin{eqnarray}}
\newcommand{\ena}{\end{eqnarray}}
\newcommand{\beas}{\begin{eqnarray*}}
\newcommand{\enas}{\end{eqnarray*}}
\newcommand{\qmq}[1]{\quad \mbox{#1} \quad}
\newcommand{\bbox}{\hfill $\Box$}
\newcommand{\nn}{\nonumber}
\newtheorem{prop}{Proposition}[section]
\newtheorem{lemma}[prop]{Lemma}
\newtheorem{theorem}[prop]{Theorem}
\newcommand{\ee}{\mathbb{E}}
\begin{document}

\begin{center} \title [Zero biasing and growth processes] {\bf Zero biasing and growth processes}
\end{center}

\author{Jason Fulman}
\address{University of Southern California \\  Los Angeles, CA 90089-2532}
\email{fulman@usc.edu}

\author{Larry Goldstein}
\address{University of Southern California \\
Los Angeles, CA 90089-2532}
\email{larry@math.usc.edu}

\keywords{Stein's method, zero biasing, Plancherel measure, Jack measure, random transposition, P\'{o}lya urn, growth process}

\date{May 13, 2011}

\thanks{Fulman was partially supported by National Science Foundation grant DMS 0802082 and National Security Agency
grant H98230-08-1-0133.}

\thanks{Goldstein was partially supported by National Security Agency
grant H98230-11-1-0162.}

\begin{abstract} The tools of zero biasing are adapted to yield a general result suitable for analyzing the behavior of certain growth processes. The main theorem is applied to prove central limit theorems, with explicit error terms in the $L^1$ metric, for certain statistics of the Jack measure on partitions and for the number of balls drawn in a P\'{o}lya-Eggenberger urn process.
\end{abstract}

\maketitle

\section{Introduction}

Zero biasing for the normal approximation of a random variable $W$ using Stein's method was introduced in Goldstein and Reinert \cite{GR}. One instance in which the zero bias method may be applied is for $W$ for which a Stein pair
$W,W'$ may be constructed, that is, for $W$ that may be coupled to a variable $W'$ such that $W,W'$ is exchangeable and
satisfies $E(W'|W)=(1-a)W$ for some $a \in (0,1]$. After giving a brief review of these methods in
Section \ref{review}, in Section \ref{general} we provide a general result allowing one to apply zero biasing when the statistic $W$ of interest is formed by certain growth processes and can be coupled in a Stein pair.

Section \ref{jack} studies a certain statistic $W_\alpha$ under the Jack$_{\alpha}$ measure on partitions.
We defer precise definitions to Section \ref{jack}, but for now mention that is of interest to
study statistical properties of Jack$_{\alpha}$ measure. The case $\alpha=1$ corresponds to the
actively studied Plancherel measure of the symmetric group. The surveys
\cite{AD},\cite{De}, \cite{O2} and the seminal papers \cite{BOO},\cite{J},\cite{O1} indicate how the Plancherel measure of
the symmetric group is a discrete analog of random matrix theory, and describe its importance in representation theory and geometry. Okounkov \cite{O2} notes that the study of Jack$_{\alpha}$ measure is an important open problem, about which relatively little is known. It is a discrete analog of Dyson's $\beta$ ensembles from random matrix theory \cite{BO1}.

The particular statistic $W_\alpha$ under Jack measure which we study is of interest for several reasons. When $\alpha=1$
it reduces to the character ratio of transpositions under Plancherel measure, or equivalently to the
spectrum of the random transposition walk. Also by Corollary 1 of \cite{DH}, there is a natural random
walk on perfect matchings of the complete graph on $n$ vertices, whose eigenvalues are precisely $\frac{W(\lambda)}{\sqrt{n(n-1)}}$, occurring
with multiplicity proportional to the Jack$_2$ measure of $\lambda$.
The proofs to date of central limit theorem for $W_\alpha$ range from combinatorial
ones using the method of moments in \cite{K1}, \cite{H}, \cite{Sn}, and the use Stein's method, which produces
an error term (but with no explicit constant) in the Kolomogorov metric \cite{F1}, \cite{F2}, \cite{SS}. Our contribution is to prove a central limit theorem in the $L^1$ metric, with a small explicit constant.

Section \ref{polya} applies the main result of Section \ref{general} to study a growth process arising from the P\'{o}lya-Eggenberger urn model. More precisely,
imagine an urn ${\mathcal U}_{A,B}$ containing $A$ white balls and $B$ black balls. At each time step one ball is drawn, and returned to the urn along
with $m$ balls of the same color. This is one of the simplest urn models, discussed in detail in the textbooks \cite{JK} and \cite{M}. We obtain a central limit theorem with explicit error term for the number of white balls drawn after $n$ steps. While \cite{JK} and \cite{M} contain many useful results and pointers to the literature, including some central limit theorems in more general settings, to the best of our knowledge the literature does not contain results that provide such error terms for this problem.

\section{Stein's method and zero biasing} \label{review}

Stein's lemma \cite{S1} states that a random variable $Z$ has the mean zero normal distribution ${\mathcal N}(0,\sigma^2)$
if and only if
\bea \label{Stein.ch}
\sigma^2 Ef'(Z)=E[Zf(Z)]
\ena
for all absolutely continuous functions $f$ for which these expectations exist. Motivated by this
characterization, for a mean zero, variance $\sigma^2$ random variable $W$ and a given function $h$ on which to test the difference between $Eh(W)$ and $Nh=Eh(Z)$, Stein \cite{S1} considered the differential equation
\bea \label{Stein.eq}
\sigma^2 f'(w)-wf(w) = h(w)-Nh.
\ena
For the unique bounded solution $h$ of (\ref{Stein.eq}), one can evaluate the required difference by
substituting $W$ for $w$ and taking expectation, to yield
\beas
Eh(W)-Nh=E[\sigma^2 f'(W)- Wf(W)].
\enas
Though it may not be immediately clear why the right hand side may be simpler to evaluate
than the left, a variety of techniques have been developed to handle various situations. For instance,
the exchangeable pair technique, from \cite{S2} handles the expectation of the right hand side when
the given random variable $W$ can be coupled to $W'$ so that $(W,W')$ is an $a$-Stein pair, that is, an
exchangeable pair that satisfies
\bea \label{WpW.Stein.pair}
E(W'|W)=(1-a)W \quad \mbox{for some $a \in (0,1)$.}
\ena
Other techniques for handling the Stein equation are discussed in detail in \cite{C1} and in the references therein, but of particular relevance here is the zero bias coupling, which we now review.

Though the mean zero normal is the unique distribution satisfying (\ref{Stein.ch}), one can ask whether
a given variable satisfies a like identity of it own. Indeed, it is shown in \cite{GR} that for every mean zero,
variance $\sigma^2$ random variable $X$, there exists a distribution for a random variable $X^*$, termed
the $X$-zero biased distribution, such that
\bea \label{zero.bia.char}
\sigma^2 Ef'(X^*)=E[Xf(X)]
\ena
for all absolutely continuous functions $f$ for which these expectations exist. The mapping of ${\mathcal L}(X)$, the
distribution of $X$, to ${\mathcal L}(X^*)$, is known as the zero bias transformation. In particular, Stein's lemma
(\ref{Stein.ch}) can be rephrased as the statement that the mean zero normal ${\mathcal N}(0,\sigma^2)$ is the unique fixed point of the zero bias transformation characterized by (\ref{zero.bia.char}).

Heuristically, then, if the transformation has a fixed point at the mean zero normal, then an approximate fixed point
should be approximately normal. This heuristic has been made precise for a variety of examples in \cite{GR}, \cite{G1}, \cite{G2}, \cite{G3} and \cite{G4} (see also \cite{C1}) in order to yield bounds in both the Kolmogorov and $L^1$ metric.
For the latter, the following result from \cite{G4} is often useful; we use $|| \cdot ||_1$ to denote the $L^1$ metric.
\begin{theorem}\label{zb2}
If the mean zero, variance 1 random variable $W$ can be coupled to $W^*$ having the $W$-zero bias distribution, then
\beas
||{\mathcal L}(W)-{\mathcal L}(Z)||_1 \le 2E|W^*-W|
\enas
where $Z$ is a standard normal variable.
\end{theorem}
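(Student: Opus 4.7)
The plan is to work through the Stein-equation machinery already set up in the excerpt, using the dual (Kantorovich--Rubinstein) representation of the $L^1$ metric
\[
\|\mathcal{L}(W)-\mathcal{L}(Z)\|_1 = \sup_{h}\bigl|Eh(W)-Eh(Z)\bigr|,
\]
where the supremum is over all $1$-Lipschitz functions $h$. Fixing such an $h$, the strategy is to express the difference $Eh(W)-Nh$ via the Stein equation (\ref{Stein.eq}) with $\sigma^2=1$, exploit the zero-bias identity (\ref{zero.bia.char}) to rewrite it as a difference of $f'$ values at $W$ and $W^*$, and then bound that difference in terms of $\|f''\|_\infty$ and $E|W^*-W|$.

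In detail, I would first let $f=f_h$ denote the bounded solution of
\[
f'(w)-wf(w)=h(w)-Nh.
\]
Substituting $W$ and taking expectations gives
\[
Eh(W)-Nh = E[f'(W)-Wf(W)].
\]
Since $W$ has mean zero and variance one, the zero-bias characterization (\ref{zero.bia.char}) yields $E[Wf(W)]=Ef'(W^*)$, so
\[
Eh(W)-Nh = E\bigl[f'(W)-f'(W^*)\bigr].
\]
Assuming $W$ and $W^*$ are coupled on a common probability space, the mean-value theorem now bounds the right-hand side by $\|f''\|_\infty\, E|W-W^*|$.

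The remaining ingredient is the standard Stein-equation regularity estimate $\|f''\|_\infty \le 2\|h'\|_\infty$, valid for Lipschitz test functions $h$; this is the step I would need to invoke carefully (it follows from differentiating the integral representation of $f$ or by citing the standard reference, e.g.\ \cite{C1}). Since $h$ is $1$-Lipschitz we get $\|f''\|_\infty\le 2$, hence
\[
\bigl|Eh(W)-Eh(Z)\bigr|\le 2\,E|W^*-W|.
\]
Taking the supremum over all $1$-Lipschitz $h$ yields the claim. The only real obstacle is quoting the correct Lipschitz-test-function smoothing bound for the Stein solution; everything else is mechanical application of the characterizations already recalled in Section \ref{review}.
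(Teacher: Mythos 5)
Your proof is correct, and it is in fact the standard argument; the paper itself does not prove Theorem~\ref{zb2} but simply cites \cite{G4}, so there is no in-text proof to compare against. The proof in \cite{G4} (see also \cite{G2} and \cite{C1}) proceeds exactly along your lines: dualize the $L^1$ (Wasserstein) metric over $1$-Lipschitz test functions, solve the Stein equation $f'(w)-wf(w)=h(w)-Nh$, use the zero-bias identity $E[Wf(W)]=Ef'(W^*)$ together with $\mathrm{Var}(W)=1$ to convert $Eh(W)-Nh$ into $E[f'(W)-f'(W^*)]$, and then apply the smoothing bound $\|f_h''\|_\infty\le 2\|h'\|_\infty$ (Lemma 2.4 of \cite{C1}). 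Two small points worth tightening: (i) for Lipschitz $h$ the Stein solution $f_h$ is not bounded, only its first and second derivatives are; what you actually need for the zero-bias substitution to be legitimate is that $E|Wf_h(W)|<\infty$ and $E|f_h'(W^*)|<\infty$, which follow from $\|f_h'\|_\infty<\infty$ and the finite second moment of $W$, so you should phrase it that way rather than calling $f_h$ ``the bounded solution''; (ii) you should state explicitly (as the theorem's hypothesis does) that $W$ and $W^*$ live on a common probability space, since otherwise $E|W^*-W|$ is not defined---you do gesture at this, but it is the coupling assumption doing the work.
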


Hence, to obtain $L^1$ bounds, the question reduces to finding a way to couple $W$ and $W^*$. Lemma \ref{GR} below of \cite{GR}, noting here that the result holds also for $a=1$, shows how the construction of a variable $W^*$ with the $W$-zero bias distribution can be achieved with
the help of the distribution $dF(w,w')$ of a Stein pair. First, it can easily be shown from (\ref{WpW.Stein.pair}) that if $W,W'$ is an $a$-Stein pair possessing second moments then
\bea \label{Stein.pair.a}
\ee W=0 \qmq{and} \ee(W'-W)^2=2a\mbox{Var}(W),
\ena
so in particular,
\bea \label{F.dagger.dist}
dF^\dagger(w,w')=\frac{(w'-w)^2}{2a}dF(w,w')
\ena
is a bivariate distribution.

\begin{lemma}
\label{GR}
If $W^\dagger, W^\ddagger$ have distribution
(\ref{F.dagger.dist}) where $F(w,w')$ is the joint distribution of an $a$-Stein pair, and $U$ is a uniformly distributed variable, independent of $W^\dagger,W^\ddagger$, then
\beas 
W^*=UW^\dagger + (1-U) W^\ddagger
\enas
has the $W$-zero bias distribution.
\end{lemma}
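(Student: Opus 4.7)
The plan is to verify directly that $W^* = UW^\dagger + (1-U)W^\ddagger$ satisfies the defining identity (\ref{zero.bia.char}) of the zero bias distribution, namely $\sigma^2 \ee[f'(W^*)] = \ee[Wf(W)]$ for every absolutely continuous $f$ for which the expectations are finite.

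Starting from $\ee[Wf(W)]$, I would first apply the Stein pair identity $\ee[W'|W] = (1-a)W$: conditioning on $W$ yields $\ee[(W-W')f(W)] = a\,\ee[Wf(W)]$.  Exchangeability of $(W,W')$ then gives $\ee[(W-W')f(W)] = -\ee[(W-W')f(W')]$, so averaging these two representations produces the symmetrized identity
\beas
\ee[Wf(W)] = \frac{1}{2a}\,\ee\bigl[(W'-W)(f(W')-f(W))\bigr].
\enas
Next I would apply the fundamental theorem of calculus in the form $f(W')-f(W) = (W'-W)\int_0^1 f'(W+t(W'-W))\,dt$, which produces the key quadratic weight
\beas
\ee[Wf(W)] = \frac{1}{2a}\,\ee\left[(W'-W)^2\int_0^1 f'\bigl(W+t(W'-W)\bigr)\,dt\right].
\enas

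At this point I would recognize the factor $(w'-w)^2/(2a)$ as precisely the Radon--Nikodym derivative defining $dF^\dagger$ in (\ref{F.dagger.dist}), so the right-hand side becomes a $\sigma^2$-weighted expectation under the law of $(W^\dagger, W^\ddagger)$, the constant $\sigma^2$ coming from the total mass identity $\ee(W'-W)^2 = 2a\sigma^2$ in (\ref{Stein.pair.a}).  This yields
\beas
\ee[Wf(W)] = \sigma^2\,\ee\left[\int_0^1 f'\bigl(W^\dagger + t(W^\ddagger-W^\dagger)\bigr)\,dt\right].
\enas
Replacing the deterministic integral by an expectation over the independent uniform $U$ then gives $\ee[Wf(W)] = \sigma^2\,\ee[f'((1-U)W^\dagger + U W^\ddagger)]$.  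To identify the inside with $W^* = UW^\dagger + (1-U)W^\ddagger$, I would observe that $dF^\dagger$ is symmetric in its two arguments, since both $dF$ (by exchangeability of the Stein pair) and the weight $(w'-w)^2$ are symmetric in $(w,w')$; hence $(W^\dagger, W^\ddagger) \stackrel{d}{=} (W^\ddagger, W^\dagger)$, and swapping them converts $(1-U)W^\dagger + UW^\ddagger$ into $W^*$ in distribution.  The proof is essentially a single Taylor expansion sandwiched between the Stein pair identity and two appeals to exchangeability; the only point requiring real care is the bookkeeping when converting integrals against $dF$ into expectations under the reweighted law $dF^\dagger$, which is pinned down exactly by (\ref{Stein.pair.a}).
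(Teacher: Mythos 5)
Your proof is correct and is essentially the standard argument from \cite{GR} (Lemma 2.1 there), which this paper cites but does not reproduce: symmetrize the Stein identity via exchangeability to get $\ee[Wf(W)]=\frac{1}{2a}\ee[(W'-W)(f(W')-f(W))]$, apply the fundamental theorem of calculus to produce the $(W'-W)^2$ weight, recognize that weight as the density of $dF^\dagger$ relative to $dF$, and absorb the $\int_0^1 dt$ into the independent uniform $U$ (your appeal to the symmetry of $dF^\dagger$ to match the paper's $W^*=UW^\dagger+(1-U)W^\ddagger$ is fine, though $U\stackrel{d}{=}1-U$ alone would also do). One small bookkeeping point: for (\ref{F.dagger.dist}) as displayed to be a probability measure one needs $\mbox{Var}(W)=1$, since (\ref{Stein.pair.a}) gives $\ee(W'-W)^2=2a\,\mbox{Var}(W)$; the paper implicitly works under this normalization (as in Theorem~\ref{zb2}), and your explicit tracking of the $\sigma^2$ factor is consistent with that.
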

In particular, if $W$ and $W^\dagger,W^\ddagger$ can be constructed on a common space,
then $W$ and $W^*$ can be also.

We remark that a number of results are available when $(W,W')$ is only an approximate Stein pair, that is, an exchangeable pair that satisfies the linearity condition (\ref{WpW.Stein.pair}) with a remainder, see for instance \cite{RR}, and \cite{C1}. Correspondingly, here we expect the conclusions of Theorems \ref{zb2} and \ref{T.cond.V} to hold for approximate Stein pairs by including in the bounds the additional terms that arise from such remainders.

In what follows we study processes for which the random variable $W$ of interest can be
written as the sum $V+T$, where $V$ is a function of a variable $\tau$ determined by the process run to a penultimate state, and $T$ a function
of running the process for one additional step. In our examples, given $\tau$, a Stein pair $(W,W')=(V+T,V+T')$ can be constructed by running two copies of the last step of chain, forming $T$ and $T'$ conditionally independent given $\tau$.

In such cases a pair of random variables with distribution (\ref{F.dagger.dist}) can be similarly constructed by forming $(W^\dagger,W^\ddagger)=(V^\Box+T_{\tau^\Box}^\dagger,V^\Box+T_{\tau^\Box}^\ddagger)$ for $V^\Box$ and $T_{\tau^\Box}^\dagger,T_{\tau^\Box}^\ddagger$ sampled by biasing the distributions of $V$ and $T,T'$ in a certain way. Our first application of Theorem \ref{T.cond.V}, to Jack measure, is particularly simple since the biasing factor to form the $V^\Box$ distribution from that of $V$ is unity, and we may therefore take $V=V^\Box$. For our second example, the P\'{o}lya-Eggenberger urn, we will see that biasing draws from the urn ${\mathcal U}_{A,B}$ in our process results in the urn ${\mathcal U}_{A+m,B+m}$.

\section{General Result} \label{general}

The purpose of this section is to prove the following theorem.

\begin{theorem} \label{T.cond.V}
Consider a bivariate distribution ${\mathcal L}(\tau,T)$ on a random object $\tau$ and random variable $T$, and a $\tau$ measurable random variable $V=V_\tau$ such that sampling $\tau$, and then, given $\tau$, sampling  $T$ and $T'$ independently from the conditional distribution ${\mathcal L}(T|\tau)$, the random variables
\bea \label{WWp.sum}
W=V+T \qmq{and} W'=V+T'
\ena
have variance one and are an $a$-Stein pair.
Denoting
\bea \label{ETWgV.noV}
\ee(T|\tau)=\mu_\tau \qmq{and} \ee((T-\mu_\tau)^2|\tau) = \sigma_\tau^2,
\ena
and the distribution of $\tau$ by $dF(\tau)$, the measure $F^\Box(\tau)$ specified by
\bea \label{dist:vtau}
dF^\Box(\tau)=\frac{\sigma_\tau^2}{a}dF(\tau)
\ena
is a probability measure, and for any coupling of $\tau$ to $\tau^\Box$ with distribution (\ref{dist:vtau}), we have
\bea \label{L1.T.cond.V}
\lefteqn{||{\mathcal L}(W)-{\mathcal L}(Z)||_1}\nonumber \\
&&\le 2 \ee |(V_{\tau^\Box}-V) +(\mu_{\tau^\Box}-\mu_\tau)|+ 2 \ee |T-\mu_\tau|+\frac{\ee |T-\mu_\tau|^3}{\mbox{Var}(T-\mu_\tau)}.
\ena
When $\mu_\tau$ equals zero and $\sigma_\tau^2$ is constant almost surely, then
\bea \label{L2.T.cond.V}
||{\mathcal L}(W)-{\mathcal L}(Z)||_1 \le 2 \ee|T|+\frac{\ee|T^3|}{\mbox{Var}(T)}.
\ena
\end{theorem}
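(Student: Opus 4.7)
The plan is to invoke Theorem~\ref{zb2}, which reduces the bound to producing a coupling of $W$ to some $W^*$ with the $W$-zero bias distribution. First I verify that $F^\Box$ is a probability measure: conditional independence of $(T,T')$ given $\tau$ yields $\ee((T'-T)^2 \mid \tau) = 2\sigma_\tau^2$, and (\ref{Stein.pair.a}) together with $\mbox{Var}(W)=1$ gives $\ee\sigma_\tau^2 = a$, so $\int(\sigma_\tau^2/a)\,dF(\tau)=1$. Applying Lemma~\ref{GR} to the $a$-Stein pair $(W,W')$ produces $W^* = UW^\dagger+(1-U)W^\ddagger$ where $(W^\dagger,W^\ddagger)$ has density (\ref{F.dagger.dist}). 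Since $dF(w,w') = dF(\tau)\,dF(T\mid\tau)\,dF(T'\mid\tau)$, this measure factors as the $\tau$-marginal $dF^\Box(\tau)$ times the conditional density $\frac{(t^\dagger-t^\ddagger)^2}{2\sigma_\tau^2}\,dF(t^\dagger\mid\tau)\,dF(t^\ddagger\mid\tau)$ on $(T^\dagger,T^\ddagger)$. Consequently $W^* = V_{\tau^\Box} + UT^\dagger + (1-U)T^\ddagger$ under this coupling.

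Second, I insert $\mu_{\tau^\Box}$ and $\mu_\tau$ to decompose $W^*-W = (V_{\tau^\Box}-V+\mu_{\tau^\Box}-\mu_\tau) + \tilde{T}^* - \tilde{T}$, writing $\tilde{T} = T-\mu_\tau$ and $\tilde{T}^* = U(T^\dagger-\mu_{\tau^\Box})+(1-U)(T^\ddagger-\mu_{\tau^\Box})$. The triangle inequality combined with Theorem~\ref{zb2} then gives $||\mathcal{L}(W)-\mathcal{L}(Z)||_1 \le 2\ee|V_{\tau^\Box}-V+\mu_{\tau^\Box}-\mu_\tau| + 2\ee|\tilde{T}| + 2\ee|\tilde{T}^*|$, supplying the first two summands of (\ref{L1.T.cond.V}); what remains is to match $2\ee|\tilde{T}^*|$ with $\ee|T-\mu_\tau|^3/\mbox{Var}(T-\mu_\tau)$.

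The crucial step, and the one I expect to be the main obstacle, is to recognize that conditional on $\tau^\Box$, $\tilde{T}^*$ has the zero bias distribution of $\tilde{T}$. Given $\tau^\Box=\tau$, the conditional independence of $(T,T')$ makes $(\tilde{T},\tilde{T}')$ a $1$-Stein pair of conditional variance $\sigma_\tau^2$, and the conditional distribution of $(T^\dagger-\mu_\tau,T^\ddagger-\mu_\tau)$ is exactly the Lemma~\ref{GR} construction for that pair; a direct verification (integrating $f'$ against $U$ uniform on $[0,1]$ and using conditional exchangeability) confirms $\sigma_\tau^2\,\ee(f'(\tilde{T}^*)\mid\tau^\Box) = \ee(\tilde{T} f(\tilde{T})\mid\tau^\Box)$ for absolutely continuous $f$. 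Taking $f(t) = t|t|/2$, so $f'(t)=|t|$, gives $\ee(|\tilde{T}^*|\mid\tau^\Box) = \ee(|T-\mu_\tau|^3\mid\tau^\Box)/(2\sigma_\tau^2)$; averaging against $dF^\Box(\tau) = (\sigma_\tau^2/a)\,dF(\tau)$ cancels the $\sigma_\tau^2$ and produces $\ee|\tilde{T}^*| = \ee|T-\mu_\tau|^3/(2a)$. Since $\mbox{Var}(T-\mu_\tau) = \ee\sigma_\tau^2 = a$, this matches the third summand of (\ref{L1.T.cond.V}). For (\ref{L2.T.cond.V}), constant $\sigma_\tau^2$ must equal $a$, so $F^\Box = F$ and we may couple $\tau^\Box=\tau$; the assumption $\mu_\tau \equiv 0$ then kills the first term and yields the stated bound.
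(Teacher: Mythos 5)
Your proof is correct and follows the paper's overall skeleton (verify $\ee\sigma_\tau^2=a$, apply Lemma~\ref{GR}, factor the $F^\dagger$ measure through $dF^\Box(\tau)$, write $W^*=V_{\tau^\Box}+UT^\dagger+(1-U)T^\ddagger$), but the two arguments diverge at the evaluation of the zero-bias contribution. The paper first reduces to the case $\mu_\tau\equiv0$ and, after observing that conditional on $\tau$ the centered $T$-pair is a $1$-Stein pair so $T_\tau^*$ is the $T_\tau$-zero bias, invokes Theorem~2.1 of \cite{G3} (a $\sigma_\tau^2$-weighted mixture of zero-biased laws is the zero bias of the mixture) to conclude that $T^{\tau^\Box}$ has the unconditional $T$-zero bias distribution, and only then applies the identity (\ref{zero.bia.char}) with $f(x)=\tfrac12 x^2\mbox{sign}(x)$. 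You instead stay at the conditional level: given $\tau^\Box$, $\tilde T^*$ has the zero bias of $T_{\tau^\Box}-\mu_{\tau^\Box}$, so $\ee(|\tilde T^*|\mid\tau^\Box)=\ee(|T_{\tau^\Box}-\mu_{\tau^\Box}|^3)/(2\sigma_{\tau^\Box}^2)$, and integrating against $dF^\Box=(\sigma_\tau^2/a)\,dF$ cancels $\sigma_\tau^2$ to give $\ee|T-\mu_\tau|^3/(2a)$; combined with $\mbox{Var}(T-\mu_\tau)=\ee\sigma_\tau^2=a$ this matches the paper's term. Your route is slightly more self-contained, bypassing the external mixture result at the small cost of not recording the cleaner intermediate fact that the $T$-component of $W^*$ is itself a $T$-zero bias variable; you also carry $\mu_\tau$ from the start rather than reducing to $\mu_\tau=0$ and substituting $V\mapsto V+\mu_\tau$, $T\mapsto T-\mu_\tau$ at the end, which is a reorganization only.
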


\begin{proof} First consider the case where $\mu_\tau=0$ a.s..
Since conditional on $\tau$ the pair $T$ and $T'$ are independent,
we have $\ee[T'T|\tau]=\ee[T'|\tau]\ee[T|\tau]=0$, and therefore, from (\ref{WWp.sum}) and (\ref{ETWgV.noV}),
\bea \label{expT-Tp2}
\ee((W'-W)^2|\tau)=\ee((T'-T)^2|\tau)=2\sigma_\tau^2.
\ena
Taking expectation and applying (\ref{Stein.pair.a}), we have that
\bea \label{Esig2}
\ee \sigma_\tau^2=a,
\ena verifying that $dF^\Box(\tau)$ is a probability measure.

By construction, the joint distribution of $(T,T',\tau)$ is, with some abuse of notation, given by
\beas
dF(t,t',\tau)=dF(t'|\tau)dF(t|\tau)dF(\tau),
\enas
and therefore the pair $(W,W')$ has distribution
\bea \label{dFwwp}
dF(w,w')=\int_{\tau,t,t': v+t=w,v+t'=w'}dF(t'|\tau)dF(t|\tau)dF(\tau),
\ena
where $v=V_\tau$.
By Lemma \ref{GR}, with $U$ an independent uniform random variable on $[0,1]$,
$$
W^*=UW^\dagger+(1-U)W^\ddagger
$$
has the $W$-zero bias distribution when $(W^\dagger,W^\ddagger)$ has distribution given by
\beas
dF^\dagger(w,w')=\frac{(w'-w)^2}{2a}dF(w,w').
\enas

For any fixed $\tau$ let $F(t|\tau)$ denote
the conditional distribution of $T$ given $\tau$. By (\ref{expT-Tp2}), for every $\tau$ the measure
\bea \label{dFtau.tt.daggers}
dF_\tau^\dagger(t,t')=\frac{(t'-t)^2}{2\sigma_\tau^2}dF(t'|\tau)dF(t|\tau),
\ena
is a bivariate probability distribution.

Now using (\ref{dFwwp}), (\ref{Esig2}) and (\ref{dFtau.tt.daggers})
\bea
\lefteqn{dF^\dagger(w,w')}\nn \\
&=& \frac{(w'-w)^2}{2a}\int_{\tau,t,t':v+t=w,v+t'=w'}dF(t'|\tau)dF(t|\tau)dF(\tau)\nn \\
&=& \int_{\tau,t,t':v+t=w,v+t'=w'}\frac{(w-w')^2}{2a} dF(t'|\tau)dF(t|\tau)dF(\tau)\nn \\
&=& \int_{\tau,t,t':v+t=w,v+t'=w'}\frac{\sigma_\tau^2}{a}\frac{(t'-t)^2}{2\sigma_\tau^2} dF(t'|\tau)dF(t|\tau)dF(\tau)\nn \\
&=& \int_\tau  \left( \int_{t,t':v+t=w,v+t'=w'} \frac{(t'-t)^2}{2\sigma_\tau^2} dF(t'|\tau)dF(t|\tau) \right) \frac{\sigma_\tau^2}{a} dF(\tau) \label{mixing.measure}\\
&=& \int_\tau  \left( \int_{t,t':v+t=w,v+t'=w'} dF_\tau^\dagger(t',t) \right) dF^\Box(\tau).\nn
\ena

The factorization in the integral indicates that given $\tau^\Box$ with distribution $dF^\Box(\tau)$,
the pair $(W^\dagger, W^\ddagger)$ can be generated by sampling $T_{\tau^\Box}^\dagger,T_{\tau^\Box}^\ddagger$ from $dF_{\tau^\Box}^\dagger(t',t)$, and then
setting
\beas
W^\dagger=V_{\tau^\Box}+T_{\tau^\Box}^\dagger \qmq{and} W^\ddagger=V_{\tau^\Box}+T_{\tau^\Box}^\ddagger,
\enas
where $V_{\tau^\Box}$ is the value of $V$ on $\tau^\Box$. In particular, letting
\bea \label{TBox}
T^{\tau^\Box}=UT_{\tau^\Box}^\dagger+(1-U)T_{\tau^\Box}^\ddagger,
\ena
we have that
\beas
W^*=U(V_{\tau^\Box}+T_{\tau^\Box}^\dagger)+(1-U)(V_{\tau^\Box}+T_{\tau^\Box}^\ddagger)=
V_{\tau^\Box}+T^{\tau^\Box}
\enas
has the $W$-zero biased distribution.

For a fixed $\tau$, let $T_\tau$ and $T_\tau'$ denote independent copies of
a random variable with distribution $dF(t|\tau)$. Clearly $T_\tau$ and $T_\tau'$ are exchangeable, and as $\mu_\tau=0$, we have
$\ee(T)=\ee \left( \ee(T|\tau) \right)= \ee \mu_\tau = 0$ and therefore $\ee(T'|T)=\ee(T')= 0$.
Hence $(T,T')$ is a $1$-Stein pair. In view of (\ref{dFtau.tt.daggers}), Lemma \ref{GR} yields that when $T_\tau^\dagger,T_\tau^\ddagger$ have
distribution $F_\tau^\dagger(t,t')$ and $U$ is an independent uniform random variable,
\bea \label{Tnu}
T_\tau^*=UT_\tau^\dagger + (1-U)T_\tau^\ddagger
\ena
has the $T_\tau$-zero biased distribution.

As $\ee(T)=0$, by (\ref{Esig2}) we obtain
\beas
a=\ee \sigma_\tau^2 = \ee \left( \ee(T^2|\tau)\right) = \ee (T^2) = \mbox{Var}(T).
\enas
Comparing (\ref{TBox}) and (\ref{Tnu}), we see that the distribution
${\mathcal L}(T^{\tau^\Box})$ is the mixture of the distributions ${\mathcal L}(T_\tau^*)$ with mixing measure
 $\sigma_\tau^2/\mbox{Var}(T)$, by (\ref{mixing.measure}). Therefore, by Theorem 2.1 of \cite{G3}, $T^{\tau^\Box}$ has the $T$-zero bias distribution.
Applying the zero
bias identity (\ref{zero.bia.char}) with $f(x)=(1/2)x^2 \mbox{sign}(x)$, we have
\beas
\ee|T^{\tau^\Box}|= \frac{\ee|T^3|}{2\mbox{Var}(T)}.
\enas

Now, with $\tau$ and $\tau^\Box$ the given coupling, letting $V=V_\tau$ and $T$ be sampled from ${\mathcal L}(T|\tau)$,
 setting $(W,W^*)=(V+T,V^\Box+T^{\tau^\Box})$ yields a coupling
of $W$ and $W^*$ on the same space, satisfying
\beas
\lefteqn{\ee|W^*-W|} \\
&=&    \ee|V_{\tau^\Box}-V+T^{\tau^\Box}-T|  \\
&\le&  \ee|V_{\tau^\Box}-V|+\ee|T|+\ee|T^{\tau^\Box}|  \\
&=&    \ee|V_{\tau^\Box}-V|+ \ee|T|+\frac{\ee|T^3|}{2\mbox{Var}(T)}.
\enas

Theorem \ref{zb2} now yields
\bea \label{L1.bd.mu0}
||{\mathcal L}(W)-{\mathcal L}(Z)||_1  \le  2\ee|V_{\tau^\Box}-V|+2 \ee|T|+\frac{\ee|T^3|}{\mbox{Var}(T)}.
\ena
When $\sigma_\tau^2$ is constant we have that $dF^\Box(\tau)=dF(\tau)$, and hence may let $\tau^\Box=\tau$; taking $V_{\tau^\Box}=V$ in (\ref{L1.bd.mu0}) now yields (\ref{L2.T.cond.V}).

To obtain the result for general $\mu_\tau$, we reduce to the case $\mu_\tau=0$ by writing
\beas
(W,W')=(V+T,V+T')=((V+\mu_\tau)+(T-\mu_\tau),(V+\mu_\tau)+(T'-\mu_\tau)).
\enas
Replacing $V$ and $T$ in (\ref{L1.bd.mu0}) by $V+\mu_\tau$ and $T-\mu_\tau$, respectively, yields (\ref{L1.T.cond.V}).
\end{proof}

\section{The Jack measure} \label{jack}

In this section we apply Theorem \ref{T.cond.V}
to study a property of the Jack$_{\alpha}$ measure on the set of
partitions of size $n$. For $\alpha>0$ the Jack$_{\alpha}$
measure chooses a partition $\lambda$ of size $n$ with probability
\bea \label{def:Jackalpha}
\mbox{Jack}_\alpha(\lambda)=
\frac{\alpha^n n!}{\prod_{x \in \lambda} (\alpha a(x) + l(x) +1)
(\alpha a(x) + l(x) + \alpha)},
\ena
where in the product over all boxes $x$
in the partition $\lambda$, $a(x)$ denotes the number of boxes in the same
row of $x$ and to the right of $x$ (the ``arm'' of $x$), and $l(x)$
denotes the number of boxes in the same column of $x$ and below $x$
(the ``leg'' of $x$). For example one calculates that the partition
\[ \lambda=
\begin{array}{c c c} \framebox{}& \framebox{}& \framebox{} \\
\framebox{}& \framebox{}& \end{array} \] of 5 has Jack$_{\alpha}$
measure
\[ \mbox{Jack}_\alpha(\lambda)= \frac{60 \alpha^2}{(2 \alpha+2)(3 \alpha+1) (\alpha+2)(2
\alpha+1)(\alpha+1)}.\]

With $\lambda$ having the $\mbox{Jack}_\alpha$ distribution, we apply the theory of Section \ref{general} to prove
an explicit $L_1$ normal approximation bound for the statistic
\beas 
W_{\alpha}(\lambda) = \frac{\sum_{x \in
\lambda} c_{\alpha}(x)}{\sqrt{\alpha {n \choose 2}}}
\enas
where $c_{\alpha}(x)$ denotes the
``$\alpha$-content'' of $x$, defined as
\[ c_{\alpha}(x) = \alpha(\mbox{column number of $x - 1$}) - (\mbox{row number of $x - 1$}).\]
In the diagram below representing a partition of 7, each box is filled with its $\alpha$-content: \[
\begin{array}{c c c c} \framebox{0}& \framebox{$\alpha$}& \framebox{2 $\alpha$}&
\framebox{3 $\alpha$} \\ \framebox{$-1$}& \framebox{$\alpha-1$}&& \\ \framebox{$-2$} &&&
\end{array}. \]

In the Kolmogorov metric, the paper \cite{F1} proved an $O(n^{-1/4})$ error term for the normal
approximation of $W_{\alpha}$; this rate was sharpened in \cite{F4} using martingales to $O(n^{(-1/2)+\epsilon})$ for any $\epsilon>0$ and in \cite{F3} to $O(n^{-1/2})$ using Bolthausen's inductive
approach to Stein's method, but without an explicit constant. The text \cite{HO} proves a central limit theorem, with no error term, for $W_{\alpha}$ using quantum probability. Here we give an explicit $L_1$ bound to the normal with small constants.

To obtain our bound we construct an exchangeable pair using Kerov's growth process for generating a random partition distributed according to Jack$_\alpha$ measure. Given a box $x$ in the diagram of $\lambda$, again letting $a(x)$ and $l(x)$ denote the arm and leg of $x$ respectively, set
\beas c_{\lambda}(\alpha) =
	\prod_{x \in \lambda} (\alpha a(x) + l(x) +1), \quad
	c_{\lambda}'(\alpha) = \prod_{x \in \lambda} (\alpha a(x) +
	l(x) + \alpha)
\enas
and, for $\tau$ a partition obtained from $\lambda$ by removing a single corner box,
 \[ \psi_{\lambda/\tau}'(\alpha) = \prod_{x \in
C_{\lambda/\tau}-R_{\lambda/\tau}} \frac{(\alpha a_{\lambda}(x) +
l_{\lambda}(x)+1)}{(\alpha a_{\lambda}(x) + l_{\lambda}(x)+\alpha)}
\frac{(\alpha a_{\tau}(x) + l_{\tau}(x)+\alpha)}{(\alpha a_{\tau}(x) +
l_{\tau}(x)+1)}
\]
where $C_{\lambda/\tau}$ is the union of columns of $\lambda$ that
intersect $\lambda-\tau$ and $R_{\lambda/\tau}$ is the union of rows of $\lambda$ that intersect $\lambda-\tau$.

The state of Kerov's growth process at times $n=1,2,\ldots$ is a partition of size $n$, starting at time one with the unique partition of 1. If at stage $n-1$ the state of the process is the partition $\tau$, a transition to the partition $\lambda$ occurs with probability
\[ \frac{c_{\tau}(\alpha)}{c_{\lambda}(\alpha)} \psi_{\lambda/\tau}'(\alpha).\] As shown in \cite{K2}, \cite{F4}, if $\tau$ is chosen from the Jack$_{\alpha}$ measure on partitions of size $n-1$, then transitioning according to this rule results in a partition $\lambda$ of $n$ distributed according to Jack$_{\alpha}$ measure.

We now present an $L^1$ bound for the normal approximation of $W_\alpha$.

\begin{theorem} \label{jackbound} Let
\bea
\label{def:Walpha}
W_{\alpha}(\lambda)=\frac{\sum_{x \in
\lambda} c_{\alpha}(x)}{\sqrt{\alpha {n \choose 2}}}
\ena
and let $W_\alpha$ be the value of $W_\alpha(\lambda)$ when $\lambda$
has the Jack$_{\alpha}$ measure distribution for some $\alpha>0$. Then for $Z$
a standard normal random variable,
\bea \label{L1.Plancherel}
||{\mathcal L}(W_{\alpha})-{\mathcal L}(Z)||_1 \le \sqrt{\frac{2}{n}} \left( 2 + \sqrt{2+\frac{\max(\alpha,1/\alpha)}{n-1}} \right).
\ena
\end{theorem}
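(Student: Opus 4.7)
The plan is to apply Theorem~\ref{T.cond.V} with the coupling provided by Kerov's growth process. Sample $\tau$ of size $n-1$ from Jack$_\alpha$ measure, then add a corner via the Kerov transition to obtain $\lambda$ of size $n$; writing the last box added as $x_{\rm new}$, decompose
\[
W_\alpha(\lambda) = V + T, \quad V = V_\tau = \frac{\sum_{x \in \tau} c_\alpha(x)}{\sqrt{\alpha \binom{n}{2}}}, \quad T = \frac{c_\alpha(x_{\rm new})}{\sqrt{\alpha \binom{n}{2}}}.
\]
Given $\tau$, sample a conditionally independent copy $T'$ from ${\mathcal L}(T|\tau)$ and set $W' = V + T'$, so that $(W,W')$ is exchangeable by construction.

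The first substantive step is to establish two identities for Jack$_\alpha$ growth: the martingale property $\mu_\tau := \ee(T|\tau) = 0$ and the constant conditional variance $\sigma_\tau^2 := \mbox{Var}(T|\tau) = 2/n$ almost surely. Both can be verified directly from the Kerov transition formulas (the second being equivalent to $\ee(c_\alpha(x_{\rm new})^2|\tau) = \alpha(n-1)$ for every $\tau$ of size $n-1$), and both are reflections of the fact that $\sum_{x \in \lambda} c_\alpha(x)$ is an eigenfunction of the Jack down-up chain on partitions with eigenvalue $1-2/n$, as used in \cite{F3},\cite{F4}; the eigenvalue property also supplies the linearity $\ee(W'|W) = (1-a)W$ with $a = 2/n$, making $(W,W')$ an $a$-Stein pair of variance one. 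Because $\sigma_\tau^2$ is constant, the biasing factor in (\ref{dist:vtau}) is unity and one may take $V^\Box = V$, as noted in Section~\ref{review}; combined with $\mu_\tau = 0$, this reduces (\ref{L1.T.cond.V}) to (\ref{L2.T.cond.V}):
\[
||{\mathcal L}(W_\alpha) - {\mathcal L}(Z)||_1 \le 2 \ee|T| + \frac{\ee|T|^3}{\mbox{Var}(T)}.
\]

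The remaining task is to bound the two terms. Jensen's inequality gives $\ee|T| \le \sqrt{\ee T^2} = \sqrt{2/n}$, contributing the $2\sqrt{2/n}$ piece. For the third moment, Cauchy--Schwarz yields $\ee|T|^3 \le \sqrt{\ee T^2 \cdot \ee T^4}$, so it suffices to prove the fourth-moment estimate
\[
\ee T^4 \le \frac{8}{n^2} + \frac{4 \max(\alpha, 1/\alpha)}{n^2(n-1)},
\]
equivalently $\ee c_\alpha(x_{\rm new})^4 \le 2\alpha^2(n-1)^2 + \max(\alpha^3, \alpha)(n-1)$. Plugging this back produces $\sqrt{2/n}\sqrt{2 + \max(\alpha,1/\alpha)/(n-1)}$ for the second piece, and summing gives (\ref{L1.Plancherel}).

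I expect the main obstacle to be the fourth-moment bound, since the naive bound $\ee|T|^3 \le \|T\|_\infty \mbox{Var}(T)$ is of size $O(\sqrt{\max(\alpha,1/\alpha)})$ and is far too crude. My plan is to express $\ee c_\alpha(x_{\rm new})^4$ as the Jack$_\alpha$-difference $\ee_{n}[\sum c_\alpha^4] - \ee_{n-1}[\sum c_\alpha^4]$ and compute these power sums as explicit polynomials in $n$ and $\alpha$. The involution $\alpha\leftrightarrow 1/\alpha$ that intertwines Jack$_\alpha$ with the transposed partition (together with $c_\alpha(x) = -\alpha\, c_{1/\alpha}(x')$) forces the fourth power sum to have a specific $\alpha$-symmetric structure, and matching small-$n$ values (in particular the Plancherel identity $\ee c(x_{\rm new})^4 = (n-1)(2n-3)$ at $\alpha=1$) confirms that the coefficients align with the claimed inequality. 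The $\max(\alpha,1/\alpha)$ factor on the right-hand side comes from bounding the $\alpha$-asymmetric lower order-in-$n$ term uniformly in $\alpha$.
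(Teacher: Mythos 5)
Your approach is essentially the paper's: the Kerov-growth decomposition $W_\alpha = V + T$, the constancy of $\mu_\tau$ and $\sigma_\tau^2$, reduction to (\ref{L2.T.cond.V}), and the two Cauchy--Schwarz estimates are all exactly what the paper does. There are two real points of divergence, one organizational and one a gap. Organizationally, the paper does \emph{not} try to prove the $\max(\alpha,1/\alpha)$ bound on $\ee T^4$ directly for all $\alpha > 0$; instead it proves the bound only for $\alpha \ge 1$ using the monotone estimate $(\alpha-1)^2/\alpha \le \alpha$, and then dispatches $\alpha < 1$ in one stroke via the identity ${\mathcal L}(\lambda_\alpha)={\mathcal L}(\lambda_{1/\alpha}^t)$ together with $W_\alpha(\lambda) = -W_{1/\alpha}(\lambda^t)$ and the symmetry of ${\mathcal L}(Z)$, applied at the level of the $L^1$ distance itself. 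Your plan of pushing the $\alpha\leftrightarrow 1/\alpha$ symmetry into the moment formula works — indeed $(\alpha-1)^2/\alpha$ is invariant under the involution and bounded by $\max(\alpha,1/\alpha)$ — but the paper's version is cleaner because it never requires you to establish or reason about the exact $\alpha$-dependence of $\ee T^4$ beyond one regime. The gap: you never actually establish the fourth-moment formula. The paper settles it by citing Section 3 of \cite{F3} for the explicit identity $\ee T^4 = \big(\alpha^2\binom{n}{2} + \alpha(\alpha-1)^2(n-1) + 3\alpha^2\binom{n-1}{2}\big)/\big(\alpha^2\binom{n}{2}^2\big)$, from which everything else drops out. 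Your proposed derivation — compute the Jack$_\alpha$ expectation of the fourth content power sum, difference at $n$ and $n-1$, then pin down the polynomial by the transpose involution and a match at $\alpha=1$ — is a plausible plan but not a proof: the involution and a single evaluation do not determine a polynomial in two variables, so you would still need to carry out the power-sum computation explicitly (as \cite{F3} does). Until that is done the argument is incomplete at its one technical pressure point.
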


\begin{proof} First we show (\ref{L1.Plancherel}) holds for all $\alpha \ge 1$.
Constructing $\tau$ from the Jack measure on partitions of size $n-1$ and then taking one
step in Kerov's growth process yields $\lambda$ with the Jack measure on partitions of size $n$,
and we may write
\beas
W_{\alpha}=V+T
\enas
where
\beas
V= \frac{\sum_{x \in
\tau} c_{\alpha}(x)}{\sqrt{\alpha {n \choose 2}}} \qmq{and} T=\frac{c_{\alpha}(\lambda/\tau)}{\sqrt{\alpha {n \choose 2}}},
\enas
and $c_{\alpha}(\lambda/\tau)$ denotes the $\alpha$-content of the box added to $\tau$ to form $\lambda$.

It is shown in \cite{F1} that constructing $\lambda'$ by taking another step in Kerov's growth
process from $\tau$, independently of $\lambda/\tau$ given $\tau$, and then forming $W_{\alpha}'$ from $\lambda'$ as $W$
is formed from $\lambda$, results in exchangeable variables $W_{\alpha},W_{\alpha}'$ that satisfy (\ref{WpW.Stein.pair}) with $a=2/n$.
Hence, (\ref{WWp.sum}) of Theorem \ref{T.cond.V} is satisfied. Corollary 5.3 of \cite{F1} gives that $Var(W)=1$.

From Section 3 of \cite{F3}, one recalls the following three facts:
\begin{enumerate}
\item \label{Jack.1} $E[T|\tau]=0$ for all $\tau$.
\item \label{Jack.2} $E[T^2|\tau]=\frac{2}{n}$ for all $\tau$.
\item \label{Jack.3} $E[T^4]= \frac{\alpha^2 {n \choose 2} + \alpha(\alpha-1)^2(n-1) + 3 \alpha^2 {n-1 \choose 2}}{\alpha^2 {n \choose 2}^2}$
\end{enumerate}

As $V$ is measurable with respect to the $\sigma$-algebra generated by $\tau$, condition (\ref{ETWgV.noV})
is satisfied. From properties (\ref{Jack.1}) and (\ref{Jack.2}) above we have, respectively, that $\mu_\tau=0$ and $\sigma_\tau^2$ is a constant, almost surely. Hence the bound (\ref{L2.T.cond.V}) of Theorem \ref{T.cond.V} holds.

Applying the Cauchy-Schwarz inequality gives that $E|T| \leq \sqrt{ET^2} = \sqrt{2/n}$, accounting
for the first term in the bound. From property (\ref{Jack.3}), now applying $\alpha \geq 1$, we have
\[ E[T^4] \leq \left[ \frac{{n \choose 2} + 3 {n-1 \choose 2}}{{n \choose 2}^2} \right] + \frac{\alpha(n-1)}{{n \choose 2}^2} \leq \frac{8}{n^2} + \frac{4 \alpha}{n^2(n-1)}. \] The Cauchy-Schwarz
inequality gives that $E|T^3| \leq \sqrt{E[T^2] E[T^4]}$, and properties (\ref{Jack.1}) and (\ref{Jack.2}) give
$\mbox{Var}(T)=2/n$, yielding the final term in the bound (\ref{L1.Plancherel}). Thus the result is shown
when $\alpha \ge 1$.

To obtain a bound for all $\alpha>0$ note first that when taking the transpose $\lambda^t$ of a partition $\lambda$ the roles of the arms $a(x)$ and legs $l(x)$ become interchanged; hence, letting $\lambda_\alpha$ be a partition with the $\mbox{Jack}_\alpha$ distribution,
from (\ref{def:Jackalpha}), for all $\alpha>0$ we have
\beas
{\mathcal L}(\lambda_\alpha)={\mathcal L}(\lambda_{1/\alpha}^t).
\enas
Next, as $W_\alpha(\lambda)=-W_{1/\alpha}(\lambda^t)$ for all $\lambda$, and ${\mathcal L}(Z)={\mathcal L}(-Z)$,
\beas
\lefteqn{||{\mathcal L}(W_\alpha(\lambda_\alpha))-{\mathcal L}(Z)||_1}\\
&=&||{\mathcal L}(-W_{1/\alpha}(\lambda_\alpha^t))-{\mathcal L}(Z)||_1\\
&=&||{\mathcal L}(-W_{1/\alpha}(\lambda_{1/\alpha}))-{\mathcal L}(-Z)||_1\\
&=&||{\mathcal L}(W_{1/\alpha}(\lambda_{1/\alpha}))-{\mathcal L}(Z)||_1.
\enas
Hence, as the bound (\ref{L1.Plancherel}) holds for all $\alpha \ge 1$, it holds for all $\alpha>0$.
\end{proof}

\section{P\'{o}lya-Eggenberger urn model} \label{polya}

For $m,n,A,B>0$ fixed integers, we define a probability distribution on the
set $\{0,1,\cdots,n\}$ by
\bea \label{def:Mnk}
M_{n,A,B}(k) = {n \choose k} \frac{(A/m)_k (B/m)_{n-k}}{(A/m+B/m)_n}.
\ena
Unless clarity demands it, we will simply write $M_n(k)$ for $M_{n,A,B}(k)$.
Here $x_r=x(x+1) \cdots (x+r-1)$, the rising factorial,
where we set $x_0$=1.

It is well known \cite{K3}, \cite{M}, \cite{JK} that the distribution $M_n(k)$ can be achieved
in the following way. Imagine an urn ${\mathcal U}_{A,B}$ that initially has $A$ white and $B$ black balls. At each time step,
one ball is drawn uniformly from the urn and then returned back along with $m$ balls of the same color. If $S_n$ is the number of white balls drawn in the first $n$ draws, then
\beas
P(S_n=k)=M_n(k) \quad \mbox{for $k=0,1, \ldots, n$.}
\enas
We note that when $S_n=k$ the urn ${\mathcal U}_{A,B}$ contains $A+km$ white balls.

In this section we prove the following $L^1$ normal approximation to the distribution of $S_n$, properly standardized.

\begin{theorem}
\label{thm:mainAB}
For $n \in \mathbb{N}$ let $S_n$ be the number of white balls added to ${\mathcal U}_{A,B}$ after $n$ time steps, and set
\bea \label{def:WnAB}
W_n= \sqrt{\frac{(A+B+m)n}{AB(A+B+nm)}} \left[ A - \frac{(A+B)S_n}{n} \right].
\ena
Then $W_n$ has mean zero and variance 1, and for $Z$ a standard normal random variable, for $n \ge (A+B+m)/2m$
\beas
\lefteqn{||{\mathcal L}(W_{n+1})-{\mathcal L}(Z)||_1}\\
&\le&  \left(\frac{4mn}{A+B+m}+  \frac{A^2+6AB+B^2}{AB} \right)\sqrt{\frac{(A+B+m)^3}{AB(A+B+nm+m)(n+1)}}
\enas
while for $n<(A+B+m)/2m$,
\beas
\lefteqn{||{\mathcal L}(W_{n+1})-{\mathcal L}(Z)||_1}\\
&\le& \left( \frac{A^2+8AB+B^2}{AB} \right)\sqrt{\frac{(A+B+m)^3}{AB(A+B+nm+m)(n+1)}}.
\enas
\end{theorem}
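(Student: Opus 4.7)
The strategy is to apply Theorem \ref{T.cond.V} to the decomposition $W_{n+1}=V+T$, where $\tau$ records the urn state after $n$ draws (equivalently $\tau=S_n$), $V=V_\tau=c_{n+1}[A-(A+B)S_n/(n+1)]$ is $\tau$-measurable, and $T=-c_{n+1}(A+B)X_{n+1}/(n+1)$ is the scaled indicator of drawing white on the $(n+1)$-st step, with $c_{n+1}=\sqrt{(A+B+m)(n+1)/(AB(A+B+(n+1)m))}$ the normalization in (\ref{def:WnAB}). Draw $X_{n+1},X_{n+1}'$ conditionally i.i.d.\ given $\tau$ and form $W_{n+1}'=V+T'$; exchangeability of $(W_{n+1},W_{n+1}')$ is immediate. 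To check the linearity condition $\ee[W_{n+1}'|W_{n+1}]=(1-a)W_{n+1}$, use the tower property to write $\ee[W_{n+1}'|W_{n+1}]=\ee[V+\mu_\tau|W_{n+1}]$, and observe that $V+\mu_\tau$ is an affine function of $S_n$. The crucial input is that the P\'olya-Eggenberger draws $X_1,\ldots,X_{n+1}$ are exchangeable, so $\ee[X_{n+1}|S_{n+1}]=S_{n+1}/(n+1)$ and hence $\ee[S_n|W_{n+1}]=nS_{n+1}/(n+1)$; a short algebraic simplification then identifies $a=(A+B)/[(n+1)(A+B+nm)]$, while $\mbox{Var}(W_{n+1})=1$ follows from the de Finetti (Beta-Binomial) representation of $S_{n+1}$.

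Next I compute $\mu_\tau=-c_{n+1}(A+B)(A+S_nm)/[(n+1)(A+B+nm)]$ and $\sigma_\tau^2=[c_{n+1}(A+B)/(n+1)]^2(A+S_nm)(B+(n-S_n)m)/(A+B+nm)^2$, and verify that the biased law $dF^\Box(\tau)=(\sigma_\tau^2/a)\,dF(\tau)$ coincides with $M_{n,A+m,B+m}$, the distribution of $S_n$ from the enlarged urn ${\mathcal U}_{A+m,B+m}$; this realizes the ``biasing sends ${\mathcal U}_{A,B}$ to ${\mathcal U}_{A+m,B+m}$'' assertion from the introduction, and follows by direct manipulation of the rising factorials in (\ref{def:Mnk}) using the identities $(A/m)_k(A/m+k)=(A/m)(A/m+1)_k$ and $(B/m)_{n-k}(B/m+n-k)=(B/m)(B/m+1)_{n-k}$. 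I then couple $\tau$ and $\tau^\Box$ by running the two P\'olya processes in parallel driven by a common sequence of uniform variables, so that at each step the probability of discrepancy is controlled by the difference of the two urns' current white-ball fractions; summing these discrepancies yields an $L^1$ estimate on $|S_n^\Box-S_n|$. This coupling step is the main technical obstacle, because the constants underlying the $4mn/(A+B+m)$ factor in the theorem must be tracked carefully through it.

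Because both $V_{\tau^\Box}-V$ and $\mu_{\tau^\Box}-\mu_\tau$ are linear in $S_n^\Box-S_n$, their combined absolute value is an explicit multiple of $|S_n^\Box-S_n|$, and after the coupling estimate one pulls out the common factor $\sqrt{(A+B+m)^3/[AB(A+B+nm+m)(n+1)]}$. The remaining terms in (\ref{L1.T.cond.V}) are clean: $T-\mu_\tau$ is, up to the scale $c_{n+1}(A+B)/(n+1)$, a centered Bernoulli with variance $a$, so by Cauchy-Schwarz $\ee|T-\mu_\tau|\le\sqrt{a}$, and since $|T-\mu_\tau|\le c_{n+1}(A+B)/(n+1)$ we get $\ee|T-\mu_\tau|^3/\mbox{Var}(T-\mu_\tau)\le c_{n+1}(A+B)/(n+1)$. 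Assembling these three contributions yields the first bound in the regime $n\ge(A+B+m)/(2m)$. In the complementary regime $n<(A+B+m)/(2m)$ one has $4mn/(A+B+m)<2$, so the coupling contribution can be absorbed into the constant, accounting for the additive $2=(8AB-6AB)/AB$ gap between the coefficients $(A^2+6AB+B^2)/AB$ and $(A^2+8AB+B^2)/AB$.
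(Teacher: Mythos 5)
Your overall strategy matches the paper's: decompose $W_{n+1}=V+T$ with $\tau=S_n$, apply Theorem~\ref{T.cond.V}, identify $M_{n,A,B}^\Box=M_{n,A+m,B+m}$, couple the two urn processes, and assemble. Your verification of the Stein pair is a genuine but equivalent alternative: where the paper invokes the down-up chain eigenvalue machinery of \cite{F5} (Lemma~\ref{expair:AB}), you compute $\ee[W_{n+1}'\mid W_{n+1}]$ directly from the exchangeability fact $\ee[X_{n+1}\mid S_{n+1}]=S_{n+1}/(n+1)$, and this does give $a_{n+1}=(A+B)/[(n+1)(A+B+nm)]$ correctly. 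Your bounds $\ee|T-\mu_\tau|\le\sqrt{a}$ and $\ee|T-\mu_\tau|^3/\mathrm{Var}(T-\mu_\tau)\le c_{n+1}(A+B)/(n+1)$ are in fact slightly sharper than the paper's (the paper uses the crude pointwise bound $|{\bf 1}_n-p|\le 1$), and they still imply the stated constants. The coupling you propose (common uniforms driving both urns) is the same as the paper's explicit bivariate kernel in Theorem~\ref{thm:ABcoupling}.

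The genuine gap is the estimate of $\ee|S_n^\Box-S_n|$, which you describe only as ``summing these discrepancies.'' A naive sum $\ee|S_n^\Box-S_n|\le\sum_j\ee|X_{j+1}^\Box-X_{j+1}|$ does not close, because the per-step discrepancy probability $|s_j^\Box(S_j^\Box)-s_j(S_j)|$ itself depends on the current gap $S_j^\Box-S_j$ (see the identity \eqref{sdifference}), so one obtains a recursion rather than the clean bound $2mn/(A+B+m)$. The paper's argument requires two further ideas you do not mention: (i) the \emph{monotonicity} of the coupling — once $S_N^\Box=S_N+1$ (or $S_N=S_N^\Box+1$), the ordering persists for all later $n$, because $s_n^\Box(j)\ge s_n(k)$ whenever $j\ge k+1$ — which allows one to drop the absolute value; and (ii) a decomposition over the first discrepancy time $N$, after which the expected future growth of the gap is computed exactly from the exchangeability identity of Lemma~\ref{term} (expected number of white draws from a Pólya urn in a given state). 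Without the monotonicity the sign of $S_n^\Box-S_n$ could fluctuate, and without conditioning on $N$ one cannot telescope the $(n-t)$ factors into the $2mn/(A+B+m)$ bound. Flagging this as ``the main technical obstacle'' is accurate, but as written the proposal does not establish the quantitative coupling bound that the final constants rest on.
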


From Theorem 3.2 of \cite{M}, we know with $A,B,m$ fixed and $n \rightarrow \infty$,
\beas
S_n/n \rightarrow_d {\mathcal B}(A/m,B/m), \quad \mbox{}
\enas
that is, the fraction of white balls drawn converges to the Beta distribution with parameters
$A/m,B/m$. In particular, the limiting value of the bound as $n \rightarrow \infty$, giving
an $L^1$ bound between the standardized Beta distribution and the normal, is
$4\sqrt{m(A+B+m)/AB}$; for, say $A=B$, the bound specializes to $4\sqrt{m(2A+m)}/A$, which tends
to zero at rate $1/\sqrt{A}$ if $m$ is fixed and $A$ grows.

For what follows it is useful to relate the distribution $M_n(k)$ to up and
down chains. On the set $\Gamma_n= \{(n,k): 0 \leq k \leq n \}$,
placing directed edges from $(n-1,k)$ to $(n,k)$ and to $(n,k+1)$ results in what is known
as known as Pascal's lattice \cite{K3}. It is convenient to define $d((n,k))={n \choose k}$,
the number of paths from $(0,0)$ to $(n,k)$.
More generally, one defines $d((n,k) / (m,j))$ to be the number of
paths from $(m,j)$ to $(n,k)$; this is ${n-m \choose k-j}$.

We define an ``up'' chain that transitions from $(n,k)$ to $(n+1,k)$ with probability
$(B+nm-km)/(A+B+nm)$ and to $(n+1,k+1)$ with probability $(A+km)/(A+B+nm)$. We also define
a ``down'' chain that transitions from $(n,k)$ to $(n-1,k-1)$ with probability $k/n$
and to $(n-1,k)$ with probability $1-(k/n)$. One easily checks that if $(n-1,k)$ is distributed
according to $M_{n-1}$, then applying the up chain gives an element of $\Gamma_n$ distributed according to
$M_n$. Similarly, if $(n,k)$ is distributed according to $M_n$, one checks that applying
the down chain gives an element of $\Gamma_{n-1}$ distributed according to $M_{n-1}$.

We denote the up chain from $\Gamma_{n-1}$ to $\Gamma_n$ by $U_{n-1}$ and the down chain
from $\Gamma_n$ to $\Gamma_{n-1}$ by $D_n$. A straightforward computation yields that
\begin{equation} \label{commut} D_{n+1}U_n = c_n U_{n-1}{D_n} + (1-c_n) I_n \end{equation}
with $c_n=\frac{n(A+B+nm-m)}{(n+1)(A+B+nm)}$, so that the tools of \cite{F5} are in force.

The following lemma shows how to use the up and down chains to construct a Stein pair, that is, a pair of
exchangeable random variables satisfying (\ref{WpW.Stein.pair}).
\begin{lemma} \label{expair:AB}
Let $W_n$ be given by (\ref{def:WnAB}) with
$S_n$ the number of white balls added to ${\mathcal U}_{A,B}$ after $n$ time steps. Now construct $S_n'$ by transitioning
down using $D_n$ and then up using $U_{n-1}$, and let $W_n'$ be given by (\ref{def:WnAB})
with $S_n$ replaced by $S_n'$. Then $W_n,W_n'$ is an $a_n$-Stein pair with
\beas 
a_n = \frac{A+B}{n(A+B+nm-m)}.
\enas
\end{lemma}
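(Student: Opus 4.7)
Since $W_n$ is an affine bijection of $S_n$, the lemma reduces to two assertions about the pair $(S_n, S_n')$: it is exchangeable, and its conditional first moment $\ee[S_n' \mid S_n]$ is affine in $S_n$ with the correct slope. I would treat these separately.

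For exchangeability, I would verify that the composite Markov kernel $K_n := U_{n-1} D_n$ on $\Gamma_n$ is reversible with respect to $M_n$. The key ingredient is the intertwining identity
\[ M_n(k)\, D_n(k,l) \;=\; M_{n-1}(l)\, U_{n-1}(l,k) \qquad \mbox{for } l \in \{k, k-1\}, \]
which falls out of (\ref{def:Mnk}) together with the rising factorial identity $(A/m)_{k-1}(A/m + k-1) = (A/m)_k$ and the binomial relation $(k/n)\binom{n}{k} = \binom{n-1}{k-1}$. Reversibility of $K_n$ then follows from
\[ M_n(k)\, K_n(k,j) \;=\; \sum_l M_{n-1}(l)\, U_{n-1}(l,k)\, U_{n-1}(l,j), \]
which is manifestly symmetric in $(k,j)$. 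Since $S_n \sim M_n$ and $S_n'$ is obtained from $S_n$ by one step of $K_n$, the pair $(S_n, S_n')$ is exchangeable, and so is $(W_n, W_n')$.

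For the linearity, I would enumerate the four possible $D_n$-then-$U_{n-1}$ trajectories from $(n,k)$. Only those ending at $k \pm 1$ contribute to $\ee[S_n' - S_n \mid S_n = k]$, and with $d := A+B+(n-1)m$ the computation reduces to
\[ \ee[S_n' - S_n \mid S_n = k] \;=\; \frac{(n-k)(A+km) - k(B+(n-k)m)}{nd} \;=\; \frac{nA - k(A+B)}{nd}. \]
Writing $W_n = c\bigl[A - (A+B)S_n/n\bigr]$ with $c$ the normalizing factor in (\ref{def:WnAB}), the right-hand numerator is exactly $n W_n/c$, so multiplying through by $-c(A+B)/n$ yields
\[ \ee[W_n' \mid S_n] - W_n \;=\; -\frac{A+B}{nd}\, W_n \;=\; -a_n\, W_n, \]
which is the required $a_n$-Stein relation, since $W_n$ is a bijective function of $S_n$. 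That $W_n$ has mean zero and variance one is built into the standardization (\ref{def:WnAB}) and can be recovered, if desired, from the explicit moments of $M_n$.

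There is no serious obstacle. Both steps are essentially one-line verifications: the first uses a single rising factorial cancellation to check the intertwining, and the second uses a telescoping cancellation of the $k^2 m$ terms in the conditional expectation. Conceptually, the lemma is a specialization to this urn of the up/down chain machinery recorded via (\ref{commut}): whenever such commuting up/down chains are available, the down-up composite is automatically reversible on each layer, so the existence of a Stein pair is not in doubt; what requires checking is merely that the scaling built into (\ref{def:WnAB}) produces the advertised constant $a_n$.
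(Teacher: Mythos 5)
Your proof is correct, and it takes a genuinely different route from the paper. The paper's argument invokes Theorem~4.3 of \cite{F5} together with the commutation relation \eqref{commut}: a left eigenvector of $U_{n-1}D_n$ with eigenvalue $1-a_n$ is produced by pushing forward the vector $(1,0)-(1,1)$ via $U^{n-1}$, the resulting expression is converted to a right eigenvector by dividing by $M_n$, and one then observes that $W_n(k)$ is a scalar multiple of that eigenvector. Reversibility of $U_{n-1}D_n$ is taken as given from the general theory of down-up chains. Your argument is self-contained: you verify reversibility directly from the intertwining identity $M_n(k)\,D_n(k,l) = M_{n-1}(l)\,U_{n-1}(l,k)$ (which indeed reduces to the rising-factorial cancellation $(A/m)_{k-1}(A/m+k-1)=(A/m)_k$ and its counterparts), and you obtain the linearity constant by enumerating the four two-step trajectories and computing $\ee[S_n'-S_n\mid S_n=k]=\bigl(nA-k(A+B)\bigr)/\bigl(n(A+B+(n-1)m)\bigr)$ by hand, which matches $-a_n W_n$ after rescaling. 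The trade-off is clear: the paper's route is shorter once the machinery of \cite{F5} is in hand and makes transparent why such eigenvectors exist in any lattice admitting commuting up/down chains, whereas your route costs a few more lines of algebra but avoids any appeal to that external theorem and makes the value of $a_n$ arise from an explicit cancellation rather than from an eigenvalue formula. One small remark: you might add the observation that $a_n \in (0,1]$ (with $a_1=1$ and $a_n<1$ for $n\ge 2$), which is needed for the pair to satisfy the definition in \eqref{WpW.Stein.pair}, though the paper does not spell this out either.
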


\begin{proof} By Theorem 4.3 of \cite{F5} and equation \eqref{commut}, a left eigenvector with eigenvalue $1-a_n$ is obtained by applying the operator $U^{n-1}$ to $(1,0)-(1,1)$. From the general theory of down-up chains (see \cite{F5}), one has that
\begin{eqnarray*} U^{n-1}(1,0) & = & \sum_{k=0}^n \frac{M_n(k) d((n,k)/(1,0))}{M_1(0) d(n,k)} \cdot (n,k) \\
& = & \sum_{k=0}^n M_n(k) \frac{{n-1 \choose k} (A+B)}{{n \choose k} B} \cdot (n,k). \end{eqnarray*} Similarly,
\begin{eqnarray*} U^{n-1}(1,1) & = & \sum_{k=0}^n \frac{M_n(k) d((n,k)/(1,1))}{M_1(1) d(n,k)} \cdot (n,k) \\
& = & \sum_{k=0}^n M_n(k) \frac{{n-1 \choose k-1} (A+B)}{{n \choose k} A} \cdot (n,k). \end{eqnarray*}

Since $U_{n-1}D_n$ is a reversible Markov chain with stationary distribution $M_n$, its
right eigenvectors are obtained from its left eigenvectors by dividing by $M_n$. Thus \[ \frac{{n-1 \choose k} (A+B)}{{n \choose k} B}
 - \frac{{n-1 \choose k-1} (A+B)}{{n \choose k} A} = \frac{A+B}{AB} \left[ A - \frac{k(A+B)}{n} \right] \]
is a right eigenvector of $U_{n-1}D_n$ with eigenvalue $\left(1 - \frac{A+B}{n(A+B+nm-m)} \right)$. Since $W_n(k)$ is
a scalar multiple of $\frac{A+B}{AB} \left[ A - \frac{k(A+B)}{n} \right]$, the result follows. \end{proof}

The next goal is to compute the mean and variance of $W_n$ given by (\ref{def:WnAB})
with $S_n$ the number of white balls drawn in the first $n$ draws. Clearly for all $n \ge 1$ one may write
\beas
S_n = {\bf 1}_0 + \cdots + {\bf 1}_{n-1}
\enas
where ${\bf 1}_j=1$ if a white ball is drawn at time $j$, and ${\bf 1}_j=0$ otherwise.
The next lemma computes the mean and covariance of the indicators ${\bf 1}_j$.

\begin{lemma} \label{term}
For $j=0,\ldots,n-1,$ let ${\bf 1}_j$ denote the indicator that a white ball is drawn from ${\mathcal U}_{A,B}$ at time $j$. Then
\begin{enumerate}
\item $E[{\bf 1}_j] = \frac{A}{A+B}$ for all $j \in \{0,\ldots,n-1\}$.
\item $E[{\bf 1}_h {\bf 1}_j]= \frac{A(A+m)}{(A+B)(A+B+m)}$ for all $0 \le h < j \le n-1$
\item $E[S_n]=\frac{nA}{A+B}$.
\end{enumerate}
\end{lemma}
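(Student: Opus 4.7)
The plan is to prove all three parts by invoking the classical exchangeability of the indicator sequence $\mathbf{1}_0,\mathbf{1}_1,\ldots,\mathbf{1}_{n-1}$ coming from a P\'olya urn, after which parts (1) and (2) reduce to elementary one- and two-step computations, and part (3) follows from (1) by linearity of expectation.

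First I would establish the exchangeability by direct calculation. Fix an outcome sequence $(\epsilon_0,\ldots,\epsilon_{n-1}) \in \{0,1\}^n$ with $k = \sum_{i=0}^{n-1}\epsilon_i$. Conditioning step by step on the history of draws, the composition of the urn at time $i$ depends only on how many whites and blacks have been drawn so far. If $w_i$ and $b_i$ denote the number of whites and blacks drawn before step $i$, then the conditional probability of the $i$-th outcome is $(A+m w_i)/(A+B+m(w_i+b_i))$ if $\epsilon_i=1$ and $(B+m b_i)/(A+B+m(w_i+b_i))$ if $\epsilon_i=0$. Multiplying these across $i=0,\ldots,n-1$ and collecting factors in the numerator yields
\begin{equation*}
\pp(\mathbf{1}_0=\epsilon_0,\ldots,\mathbf{1}_{n-1}=\epsilon_{n-1}) = \frac{\prod_{i=0}^{k-1}(A+im)\,\prod_{i=0}^{n-k-1}(B+im)}{\prod_{i=0}^{n-1}(A+B+im)},
\end{equation*}
since the numerators depend only on \emph{how many} whites and blacks have been drawn, not on the order. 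This expression is invariant under permutations of $(\epsilon_0,\ldots,\epsilon_{n-1})$, proving exchangeability. (As a sanity check, summing over the $\binom{n}{k}$ orderings reproduces $M_n(k)$ in (\ref{def:Mnk}).)

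For part (1), exchangeability gives $\ee[\mathbf{1}_j] = \ee[\mathbf{1}_0] = \pp(\mathbf{1}_0=1) = A/(A+B)$, the latter being immediate from the initial composition of ${\mathcal U}_{A,B}$. For part (2), exchangeability gives $\ee[\mathbf{1}_h \mathbf{1}_j] = \ee[\mathbf{1}_0 \mathbf{1}_1]$ for any $0 \le h < j \le n-1$, and
\begin{equation*}
\ee[\mathbf{1}_0 \mathbf{1}_1] = \pp(\mathbf{1}_0=1)\,\pp(\mathbf{1}_1=1 \mid \mathbf{1}_0=1) = \frac{A}{A+B}\cdot\frac{A+m}{A+B+m},
\end{equation*}
since after drawing a white at time $0$ the urn contains $A+m$ whites out of $A+B+m$ balls. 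Part (3) is then immediate: $\ee[S_n] = \sum_{j=0}^{n-1}\ee[\mathbf{1}_j] = nA/(A+B)$.

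There is no real obstacle here; the only substantive step is writing out the product formula for the joint probability and observing its symmetry in the positions of the white draws. Once that is in hand, (1), (2), (3) are one-line consequences.
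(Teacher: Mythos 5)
Your proof is correct and takes the same route as the paper: exchangeability of the indicator sequence reduces (1) and (2) to the first one and two draws, and (3) follows by linearity. The only difference is that you prove exchangeability directly from the product formula, whereas the paper simply cites it as classical (referring the reader to \cite{JK} or \cite{M}); your explicit computation is a harmless and self-contained addition.
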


\begin{proof} It is classical and elementary that the indicators ${\bf 1}_j, j=0,\ldots,n-1$ are an exchangeable sequence (see
\cite{JK} or \cite{M} for a proof). Thus $E[{\bf 1}_j]$ is the probability that the first ball drawn is white, and
$E[{\bf 1}_h {\bf 1}_j]$ is the probability that the first two balls drawn are white. These observations, and linearity of expectation, yields the lemma.
\end{proof}

With the help of Lemma \ref{term}, we now compute the mean and variance of $W_n$.
\begin{lemma}
\label{mean.var.W.AB}
If $W_n$ is given by (\ref{def:WnAB}) where $S_n$ is the number of white balls added to ${\mathcal U}_{A,B}$ after
$n$ time steps, then
\beas
E[S_n^2]=\frac{nA}{A+B} + 2 {n \choose 2} \frac{A(A+m)}{(A+B)(A+B+m)},
\enas
\beas
EW_n=0 \qmq{and} \mbox{Var}(W_n) = 1.
\enas
\end{lemma}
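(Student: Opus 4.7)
The approach is routine moment computation built directly on Lemma \ref{term}. I would first establish the formula for $E[S_n^2]$, then use linearity of expectation to get $EW_n=0$, and finally compute $\text{Var}(S_n)$ and substitute it into the defining expression for $W_n$ to verify unit variance. No new probabilistic input is needed beyond Lemma \ref{term}; the work is all algebra.

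Concretely, writing $S_n = {\bf 1}_0 + \cdots + {\bf 1}_{n-1}$ and using ${\bf 1}_j^2={\bf 1}_j$, expand
\[
S_n^2 = \sum_{j=0}^{n-1} {\bf 1}_j + 2\sum_{0\le h<j\le n-1} {\bf 1}_h {\bf 1}_j.
\]
Taking expectations and invoking parts (1) and (2) of Lemma \ref{term} yields the stated formula for $E[S_n^2]$ immediately. For the mean of $W_n$, note that $W_n$ is an affine function of $S_n$, so part (3) of Lemma \ref{term} gives
\[
EW_n = \sqrt{\tfrac{(A+B+m)n}{AB(A+B+nm)}}\left[A-\frac{(A+B)}{n}\cdot\frac{nA}{A+B}\right]=0.
\]

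The only nontrivial step is the variance computation. Since $W_n$ is affine in $S_n$ with leading coefficient $-\tfrac{A+B}{n}\sqrt{\tfrac{(A+B+m)n}{AB(A+B+nm)}}$, we have
\[
\text{Var}(W_n) = \frac{(A+B)^2(A+B+m)}{nAB(A+B+nm)}\,\text{Var}(S_n).
\]
Using $\text{Var}(S_n) = E[S_n^2]-(E[S_n])^2$ and the expressions just derived, place everything over the common denominator $(A+B)^2(A+B+m)$ and factor out $nA$. The bracketed numerator becomes
\[
(A+B)(A+B+m) + (n-1)(A+m)(A+B) - nA(A+B+m),
\]
and the expected simplification should telescope to $B(A+B+nm)$, giving $\text{Var}(S_n) = \tfrac{nAB(A+B+nm)}{(A+B)^2(A+B+m)}$. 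Substituting this back cancels against the prefactor to leave $\text{Var}(W_n)=1$.

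The main (and only) obstacle is bookkeeping in this cancellation; the normalization in (\ref{def:WnAB}) was chosen precisely so that the $n$-dependent and $m$-dependent factors all collapse, but one must carefully track the cross terms between $(A+B)(A+B+m)$ and $nA(A+B+m)$ to see the final factor of $B(A+B+nm)$ emerge. Once that identity is verified, the unit variance claim follows at once.
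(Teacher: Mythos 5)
Your proof is correct and follows essentially the same route as the paper: expand $S_n^2$ as a sum of products of indicators, apply Lemma \ref{term}, compute $\mathrm{Var}(S_n)$, and cancel against the normalizing prefactor; your algebraic simplification of the bracketed numerator to $B(A+B+nm)$ checks out. The one small difference is how you get $EW_n=0$: you compute it directly from $E[S_n]=nA/(A+B)$ (Lemma \ref{term}, part 3), whereas the paper obtains it immediately from the Stein pair identity (\ref{Stein.pair.a}) via Lemma \ref{expair:AB}; both are valid, and your route has the minor virtue of relying on Lemma \ref{term} alone without invoking the exchangeable-pair machinery.
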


\begin{proof}  Since $W_n,W_n'$ is a Stein pair we have that $EW_n=0$ by (\ref{Stein.pair.a}). Now, using the fact that ${\bf 1}_i^2={\bf 1}_i$, and both parts of Lemma \ref{term}, we obtain
\beas
E[S_n^2] & = & E[({\bf 1}_0+\cdots+{\bf 1}_{n-1})^2] \\
& = & E \left[ \sum_{i=0}^{n-1} {\bf 1}_i + 2 \sum_{0 \leq h<j \leq n-1} {\bf 1}_h{\bf 1}_j \right]\\
& = & \frac{nA}{A+B} + 2 {n \choose 2} \frac{A(A+m)}{(A+B)(A+B+m)},
\enas
yielding the first claim.

Applying the expression for $E[S_n]$ given by Lemma \ref{term}, it follows that
\begin{eqnarray*}
\mbox{Var}(S_n) & = & \left[ \frac{nA}{A+B} + 2 {n \choose 2} \frac{A(A+m)}{(A+B)(A+B+m)} - \frac{n^2A^2}{(A+B)^2} \right] \\
& = & \frac{AB(A+B+nm)n}{(A+B+m)(A+B)^2}. \end{eqnarray*} Hence, from the definition (\ref{def:WnAB}) of $W_n$ we conclude
that \[ \mbox{Var}(W_n) = \frac{(A+B+m)(A+B)^2}{AB(A+B+nm)n} \mbox{Var}(S_n) = 1.\]
\end{proof}

We will apply Theorem \ref{T.cond.V} by writing $W_{n+1}=V+T$ where
\bea \label{def:V}
V = \sqrt{\frac{(A+B+m)(n+1)}{AB(A+B+nm+m)}} \cdot \left[A - \frac{(A+B)S_n}{n+1} \right]
\ena
and
\bea \label{def:urnT}
T = - \sqrt{\frac{(A+B+m)}{AB(A+B+nm+m)(n+1)}} \cdot (A+B) \cdot {\bf 1}_n,
\ena
and letting $\tau=S_n$. We note that the condition in Theorem \ref{T.cond.V} that $V$ be $\tau$ measurable is here clearly satisfied.
The following lemma gives the properties of $T$ needed for computing an $L^1$ bound using Theorem \ref{T.cond.V}.

\begin{lemma} \label{t1}
Let $T$ be given by (\ref{def:urnT}) and $\tau=S_n$.
\begin{enumerate}

\item \label{condmeanAB} The conditional mean $\mu_\tau=E(T|\tau)$ is given by
\beas
\mu_\tau=- \sqrt{\frac{(A+B+m)}{AB(A+B+nm+m)(n+1)}} \cdot \frac{(A+B)  (A+mS_n)}{A+B+mn}.
\enas

\item \label{cond.var.T.urn} The conditional variance $\sigma_\tau^2=E((T-\mu_\tau)^2|\tau)$ is given by
\beas
\sigma_\tau^2 = \frac{(A+B+m)(A+B)^2}{AB(A+B+nm+m)(n+1)}  \frac{(A+mS_n)(nm-mS_n+B)}{(A+B+mn)^2}.
\enas

\item \label{urn.std.2rd.t} The variance $\mbox{Var}(T-\mu_\tau)$ satisfies
\beas
\mbox{Var}(T-\mu_\tau)=  \frac{(A+B)}{(n+1)\left( A+B+nm\right)}.
\enas

\item \label{EcondT-mutau} The absolute deviation of $T$ about $\mu_\tau$ satisfies
\beas
E|T-\mu_\tau| \leq \sqrt{\frac{(A+B+m)(A+B)^2}{AB(A+B+nm+m)(n+1)}}.
\enas




\item \label{urn.std.3rd.t.-mu} The third order deviation of $T$ about $\mu_\tau$, standardized by $\mbox{Var}(T-\mu_\tau)$,
satisfies
\beas
\frac{E|T-\mu_{\tau}|^3}{\mbox{Var}(T-\mu_\tau)} \leq  \sqrt{\frac{(A+B+m)^3}{AB(A+B+nm+m)(n+1)}} \cdot \frac{(A+B)^2}{AB}.
\enas

\end{enumerate}
\end{lemma}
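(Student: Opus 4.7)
The plan is to treat each of the five claims as a short direct computation, resting on two elementary observations. First, write $T = -C \cdot {\bf 1}_n$ with $C = (A+B)\sqrt{(A+B+m)/[AB(A+B+nm+m)(n+1)]}$; then conditional on $\tau = S_n$ the urn ${\mathcal U}_{A,B}$ holds $A+mS_n$ white balls among $A+B+nm$ total, so ${\bf 1}_n$ is conditionally Bernoulli with parameter $p_\tau = (A+mS_n)/(A+B+nm)$. Parts (1) and (2) then fall out immediately from $\mu_\tau = -Cp_\tau$ and $\sigma_\tau^2 = C^2 p_\tau(1-p_\tau)$, combined with $1 - p_\tau = (B+nm-mS_n)/(A+B+nm)$; substituting the value of $C^2$ matches the stated identities.

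For part (3), rather than expand $E[\sigma_\tau^2]$ into moments of $S_n$ via Lemmas \ref{term} and \ref{mean.var.W.AB}, I would invoke equation (\ref{Esig2}) from inside the proof of Theorem \ref{T.cond.V}, which asserts that $E\sigma_\tau^2 = a$ whenever $(V+T, V+T')$ is an $a$-Stein pair with $V$ measurable in $\tau$ and $T, T'$ conditionally independent given $\tau$. Here Lemma \ref{expair:AB} gives $a_{n+1} = (A+B)/[(n+1)(A+B+nm)]$, and since $E(T - \mu_\tau) = 0$ we conclude $\mbox{Var}(T - \mu_\tau) = E[(T - \mu_\tau)^2] = E\sigma_\tau^2 = a_{n+1}$, which is exactly part (3).

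Parts (4) and (5) follow from the pointwise bound $|T - \mu_\tau| = C \cdot |{\bf 1}_n - p_\tau| \le C$, valid since $p_\tau \in [0,1]$. Part (4) is then immediate, since the bound displayed there is exactly $C$. For part (5), the same pointwise bound yields $|T - \mu_\tau|^3 \le C \cdot (T - \mu_\tau)^2$, hence $E|T - \mu_\tau|^3/\mbox{Var}(T - \mu_\tau) \le C$; one then observes that the bound stated in (5) exceeds $C$ by the factor $(A+B)(A+B+m)/(AB) \ge (A+B)^2/(AB) \ge 4$ (AM--GM), so the claimed inequality follows a fortiori.

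I do not expect a serious obstacle: the arguments are elementary once the scaled-Bernoulli representation $T = -C \cdot {\bf 1}_n$ is in hand. The only judgment call is opting for the Stein-pair identity $E\sigma_\tau^2 = a_{n+1}$ to handle part (3) rather than a brute-force moment expansion. One subtle point worth remarking on is that the slightly loose forms chosen in (4) and (5) are calibrated to factor out the common expression $\sqrt{(A+B+m)^3/[AB(A+B+nm+m)(n+1)]}$ that organises the final bound in Theorem \ref{thm:mainAB}.
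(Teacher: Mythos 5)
Your proposal is correct, and parts (1), (2), and (4) coincide with the paper's own argument: the paper also proceeds from the scaled-Bernoulli representation, citing equation (\ref{P1jgivenk}) and the pointwise bound (\ref{1n-E1n}). Where you diverge is in parts (3) and (5). For part (3), the paper expands $E(T-\mu_\tau)^2 = C^2\, E[(A+mS_n)(B+m(n-S_n))]/(A+B+mn)^2$ and evaluates the expectation directly via the moment formulas of Lemmas \ref{term} and \ref{mean.var.W.AB}, simplifying the resulting polynomial. You instead invoke the identity $E\sigma_\tau^2 = a$ (equivalently, that (\ref{dist:vtau}) is a probability measure), and read off $a_{n+1}$ from Lemma \ref{expair:AB}. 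This is a cleaner route and avoids the algebraic simplification, but to be fully rigorous you should flag that the identity $E\sigma_\tau^2 = a$ requires $\mbox{Var}(W_{n+1})=1$ (Lemma \ref{mean.var.W.AB}) and that the pair $(W_{n+1},W_{n+1}')$ from Lemma \ref{expair:AB} has the conditionally-independent structure demanded by Theorem \ref{T.cond.V}; the latter follows from reversibility of the down-up chain and is used implicitly later in the paper, but is worth stating when you lean on the theorem's internal equation (\ref{Esig2}). For part (5), the paper bounds $|T-\mu_\tau|^3 \le C^3$ and divides by the exact variance from part (3); you instead bound $|T-\mu_\tau|^3 \le C(T-\mu_\tau)^2$ and cancel the variance, obtaining the strictly sharper intermediate estimate $E|T-\mu_\tau|^3/\mbox{Var}(T-\mu_\tau)\le C$, which you then correctly observe is dominated by the stated bound since the quotient of the two is $(A+B)(A+B+m)/(AB)\ge 1$. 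Both approaches buy roughly the same thing in the end, since the looser form of (5) is chosen in the paper to factor uniformly with the other terms in Theorem \ref{thm:mainAB}, but your variant is tidier and would in fact slightly improve the constant there.
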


\begin{proof} Parts \ref{condmeanAB} and \ref{cond.var.T.urn} follow immediately from (\ref{def:urnT}) and that
\bea \label{P1jgivenk}
P({\bf 1}_j=1|S_j=k) = \frac{A+mk}{A+B+mj}
\ena
for all $j=0,\ldots,n-1,k=0,\ldots,j$.

For part (\ref{urn.std.2rd.t}), first note that as $E(T-\mu_\tau|\tau)=0$ we have $\mbox{Var}(T-\mu_\tau)=E(T-\mu_\tau)^2$.  Now again using (\ref{P1jgivenk}), we have that $E\left( (T-\mu_\tau)^2 |S_n \right)$ equals
\beas
\lefteqn{\left( \frac{(A+B+m)(A+B)^2}{AB(A+B+nm+m)(n+1)} \right)  E\left[ \left( {\bf 1}_n - \frac{A+mS_n}{A+B+nm}\right)^2| S_n \right]}\\
&=& \left( \frac{(A+B+m)(A+B)^2}{AB(A+B+nm+m)(n+1)} \right) \left( \frac{(A+mS_n)(B+m(n-S_n))}{\left( A+B+nm\right)^2} \right).
\enas
Expanding the product $(A+mS_n)(B+m(n-S_n))$, taking expectation using the expressions for $E[S_n]$ and $E[S_n^2]$ provided by Lemmas \ref{term} and \ref{mean.var.W.AB}, respectively, the claim follows after some simplification.

For part \ref{EcondT-mutau}, one has that
\begin{eqnarray*}
& & E|T-\mu_\tau| \\
& = & E[E|T-\mu_\tau||S_n] \\
& = & \sqrt{\frac{A+B+m}{AB(A+B+nm+m)(n+1)}} \cdot (A+B) \\
& & \cdot E \left[ \left| {\bf 1_n}- \frac{A+mS_n}{A+B+nm} \right| | S_n \right] \\
& \leq & \sqrt{\frac{A+B+m}{AB(A+B+nm+m)(n+1)}} \cdot (A+B). \end{eqnarray*} The second equality used (\ref{P1jgivenk}),
and the inequality that
\bea \label{1n-E1n}
E \left[ \left| {\bf 1_n}- \frac{A+mS_n}{A+B+nm} \right|^p | S_n \right] \leq 1 \quad \mbox{for all $p \ge 0$}
\ena
with $p=1$.

Now, for part \ref{urn.std.3rd.t.-mu}, similarly, applying (\ref{1n-E1n}) with $p=3$ we obtain
\begin{eqnarray*}
E|T-\mu_\tau|^3 & = & E[E|T-\mu_\tau|^3|S_n] \\
& = & \left[ \frac{A+B+m}{AB(A+B+nm+m)(n+1)} \right]^{3/2} (A+B)^3 \\
 & & \cdot E \left[ \left| {\bf 1_n}- \frac{A+mS_n}{A+B+nm} \right|^3 | S_n \right] \\
& \leq & \left[ \frac{A+B+m}{AB(A+B+nm+m)(n+1)} \right]^{3/2} (A+B)^3.
\end{eqnarray*}
Part \ref{urn.std.3rd.t.-mu} now follows from part \ref{urn.std.2rd.t} by division. 

\end{proof}

Specializing (\ref{dist:vtau}) to the case at hand, with $M_{n,A,B}(k)$ the distribution of $S_n$ given by (\ref{def:Mnk}), we now consider constructing a coupling of $S_n$ to a random variable $S_n^\Box$ with distribution
\bea \label{def:Mnbox}
M_{n,A,B}^\Box(k) = \frac{\sigma_k^2}{a_{n+1}}M_{n,A,B}(k)
\ena
where $a_{n+1}$ is given by Lemma \ref{expair:AB}. The next result shows that one can achieve a variable with distribution $S_n^\Box$ by adding
 $2m$ additional balls to the urn at time zero, $m$ white and $m$ black, that is, by using the urn ${\mathcal U}_{A+m,B+m}$.

\begin{lemma} \label{MnMnBox}
\beas
M_{n,A,B}^\Box = M_{n,A+m,B+m}.
\enas
\end{lemma}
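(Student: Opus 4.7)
The plan is to reduce the claim to a direct algebraic verification. Since both $M_{n,A,B}^\Box(k)$ and $M_{n,A+m,B+m}(k)$ are explicit rational functions of $A,B,m,n,k$, proving the lemma amounts to checking the identity
\[
\frac{\sigma_k^2}{a_{n+1}} \;=\; \frac{M_{n,A+m,B+m}(k)}{M_{n,A,B}(k)}.
\]
No new probabilistic input is needed beyond the formulas for $\sigma_k^2$ and $a_{n+1}$ supplied by Lemma \ref{t1}(\ref{cond.var.T.urn}) and Lemma \ref{expair:AB}, and a routine incidentally implicit fact that the right side of (\ref{def:Mnbox}) is a probability mass function.

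First I would assemble the left side. Lemma \ref{expair:AB} applied at level $n+1$ gives $a_{n+1} = (A+B)/[(n+1)(A+B+nm)]$, and Lemma \ref{t1}(\ref{cond.var.T.urn}) with $S_n$ replaced by $k$ supplies $\sigma_k^2$. Dividing and cancelling the factor $(n+1)/(A+B)$ in numerator against denominator, I expect to obtain
\[
\frac{\sigma_k^2}{a_{n+1}} \;=\; \frac{(A+B+m)(A+B)\,(A+mk)(B+m(n-k))}{AB\,(A+B+nm+m)(A+B+nm)}.
\]
For the right side of the target identity, I would write $a = A/m$ and $b = B/m$ so that $M_{n,A,B}(k) = \binom{n}{k}(a)_k(b)_{n-k}/(a+b)_n$ and $M_{n,A+m,B+m}(k) = \binom{n}{k}(a+1)_k(b+1)_{n-k}/(a+b+2)_n$. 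The binomial factors cancel, and the elementary Pochhammer identities
\[
\frac{(a+1)_k}{(a)_k}=\frac{a+k}{a},\qquad \frac{(b+1)_{n-k}}{(b)_{n-k}}=\frac{b+n-k}{b},\qquad \frac{(a+b+2)_n}{(a+b)_n}=\frac{(a+b+n)(a+b+n+1)}{(a+b)(a+b+1)}
\]
yield the ratio $M_{n,A+m,B+m}(k)/M_{n,A,B}(k)$ as a rational expression in $a,b,n,k$. Substituting back $a=A/m$ and $b=B/m$ and clearing powers of $m$ produces exactly the displayed expression for $\sigma_k^2/a_{n+1}$, completing the proof.

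The only obstacle is bookkeeping: tracking the $m$ factors introduced when converting between $a,b$ and $A,B$, and verifying that the $(A+B+nm)^2$ in the denominator of $\sigma_k^2$ combines correctly with the $(A+B+nm)$ coming from $1/a_{n+1}$. No deeper structural argument is required—the claim is an algebraic identity between two closed-form expressions, and the proof is essentially a simplification.
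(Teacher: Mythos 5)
Your proposal is correct and takes essentially the same route as the paper: both compute $\sigma_k^2/a_{n+1}$ from Lemma~\ref{t1}(\ref{cond.var.T.urn}) and Lemma~\ref{expair:AB}, then reduce the identity to Pochhammer-ratio cancellations of the form $(a+1)_k/(a)_k=(a+k)/a$ after substituting $a=A/m$, $b=B/m$. The only cosmetic difference is that you compare ratios $M_{n,A+m,B+m}(k)/M_{n,A,B}(k)$ while the paper multiplies $\sigma_k^2/a_{n+1}$ by $M_{n,A,B}(k)$ and simplifies; the algebra is the same.
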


\begin{proof}
From (\ref{cond.var.T.urn}) of Lemma \ref{t1} and Lemma \ref{expair:AB}, we have
\beas
\frac{\sigma_k^2}{a_{n+1}}= \frac{(A/m+B/m)(A/m+B/m+1)(A/m+k)(B/m+n-k)}{(A/m)(B/m)(A/m+B/m+n)(A/m+B/m+n+1)}.
\enas
Hence, for all $k \in \{0,\ldots,n\}$,
\beas
\lefteqn{M_{n,A,B}^\Box (k)}\\
&=& \frac{(A/m+B/m) (A/m+B/m+1) (A/m+k)(B/m+n-k)}{(A/m)(B/m)(A/m+B/m+n)(A/m+B/m+n+1)} \\
& & \cdot {n \choose k}\frac{(A/m)_k(B/m)_{n-k}}{(A/m+B/m)_n}\\
&=&{n \choose k}\frac{(A/m+1)_k(B/m+1)_{n-k}}{(A/m+B/m+2)_n}\\
&=&M_{n,A+m,B+m}(k).
\enas
\end{proof}
Lemma  \ref{MnMnBox} shows that for the process $S_n$ on the urn ${\mathcal U}_{A,B}$, the process $S_n^\Box$ is for the urn ${\mathcal U}_{A+m,B+m}$. As for both processes no additional balls have been added at time zero, we have that
\bea \label{S0SBox0}
S_0=0 \qmq{and} S_0^\Box=0.
\ena
As at times $n \ge 1$ both of these chains increase by $1$ when a white ball has been selected, if
$S_n=k$ and $S_n^\Box=j$, then $S_{n+1}=k+1$ and $S_{n+1}^\Box=j+1$ with respective probabilities
\bea \label{def:snksnkbox}
s_n(k)= \frac{A+km}{A+B+mn} \qmq{and} s_n^\Box(j)=\frac{A+m+jm}{A+B+2m+mn}.
\ena

We now couple $S_n$ and $S_n^\Box$ by coupling, at each stage, the two Bernoulli variables that indicate the drawing of a white ball in each urn. In particular, we couple these two Bernoulli variables so that the chance they are not equal is minimized.

\begin{theorem} \label{thm:ABcoupling}
Let $s_n(k)$ and $s_n^\Box(j)$ be given by (\ref{def:snksnkbox}) for $n,j,k \in \mathbb{N}$. Then
 the bivariate chain taking values in $\mathbb{N} \times \mathbb{N}$ characterized by
the initial condition $(S_0,S_0^\Box)=(0,0)$ and transitions
\beas
p_{n+1,n}(u,v|k,j)=P(S_{n+1}=u,S_{n+1}^\Box=v|S_n=k,S_n^\Box=j)
\enas
at times $n \ge 0$ according to
\beas
p_{n+1,n}(u,v|k,j)=\left\{
\begin{array}{cl}
\min(s_n(k),s_n^\Box(j))                  & (u,v)=(k+1,j+1)\\
(s_n^\Box(j)-s_n(k))^+                    & (u,v)=(k,j+1)\\
(s_n(k)-s_n^\Box(j))^+                    & (u,v)=(k+1,j)\\
1 - \max(s_n(k),s_n^\Box(j))         & (u,v)=(k,j)
\end{array}
\right.
\enas
is a coupling on a joint space of the urn models ${\mathcal U}_{A,B}$ and ${\mathcal U}_{A+m,B+m}$, respectively.

In addition, letting
\beas
N=\inf\{n \ge 1: S_n \not = S_n^\Box \}
\enas
we have
\bea \label{absSNSNBoxis1}
|S_N - S_N^\Box|=1
\ena
and
\beas
\mbox{if $S_N^\Box=S_N+1$ then $S_n^\Box \ge S_n$ for all $n \ge 0$,}
\enas
while, otherwise,
\beas
\mbox{if $S_N=S_N^\Box+1$ then $S_n \ge S_n^\Box$ for all $n \ge 0$}.
\enas

\end{theorem}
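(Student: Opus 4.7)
The plan is to verify the three claims in sequence. For the validity of the coupling, the identity
\[
\min(a,b) + (a-b)^+ + (b-a)^+ + (1-\max(a,b)) = 1
\]
together with the bounds $s_n(k), s_n^\Box(j) \in [0,1]$ shows the four listed probabilities are nonnegative and sum to one. The marginal $P(S_{n+1}=k+1 \mid S_n=k, S_n^\Box=j) = \min(s_n(k), s_n^\Box(j)) + (s_n(k)-s_n^\Box(j))^+ = s_n(k)$ matches the up-chain $U_n$ on $\mathcal{U}_{A,B}$ in either case $s_n(k) \le s_n^\Box(j)$ or $s_n(k) > s_n^\Box(j)$, and the symmetric calculation handles the marginal for $S_n^\Box$. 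Combined with the shared initial condition $(S_0, S_0^\Box) = (0,0)$, this confirms we have the claimed coupling.

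For the claim $|S_N - S_N^\Box|=1$, by definition of $N$ and the initial condition, $S_{N-1} = S_{N-1}^\Box = k$ for some $k$. From $(k,k)$ the coupling supports only the four next states $(k,k), (k,k+1), (k+1,k), (k+1,k+1)$, so $|S_N - S_N^\Box| \le 1$; since $S_N \ne S_N^\Box$ by definition of $N$, the difference is exactly one.

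The direction-preservation claim is the main content. By symmetry, consider the case $S_N^\Box = S_N + 1$; the opposite direction is handled by interchanging the roles of the two chains. It suffices to prove by induction on $n \ge N$ that $S_n^\Box > S_n$, since $S_n = S_n^\Box$ trivially for $n < N$. From any state $(k,j)$ with $j > k$, inspection of the four possible next states $(k,j), (k+1,j), (k,j+1), (k+1,j+1)$ shows that the only one that can produce equality is $(k+1,j)$ in the boundary case $j = k+1$, and that transition has probability $(s_n(k) - s_n^\Box(k+1))^+$. The key computation,
\begin{equation*}
s_n^\Box(k+1) - s_n(k) = \frac{A+(k+2)m}{A+B+(n+2)m} - \frac{A+km}{A+B+mn} = \frac{2m(B + m(n-k))}{(A+B+mn)(A+B+(n+2)m)},
\end{equation*}
is strictly positive for all $0 \le k \le n$ and $B > 0$, so this transition probability is zero and the induction closes. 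An entirely analogous identity, showing $s_n(k+1) - s_n^\Box(k-1) > 0$ by the same algebra, rules out the only equality-producing transition in the opposite case and thus handles $S_N = S_N^\Box+1$. The only real technical point is this algebraic identity; everything else is bookkeeping about which of the four allowed transitions can collapse a strict inequality.
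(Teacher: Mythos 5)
Your argument is correct and follows essentially the same route as the paper: you verify that the four transition probabilities sum to one and that the marginals match $s_n(k)$ and $s_n^\Box(j)$; you use the fact that each chain steps by at most one to get $|S_N - S_N^\Box| = 1$; and you prove order-preservation by induction, ruling out the single gap-closing transition via a sign computation. The only slip is in the second case: at a boundary state $(S_n,S_n^\Box)=(k+1,k)$, the relevant transition probability is $(s_n^\Box(k)-s_n(k+1))^+$, so the inequality you need is $s_n(k+1) \ge s_n^\Box(k)$, not $s_n(k+1) - s_n^\Box(k-1) > 0$; the same algebra (with matching indices) does deliver it, so the proof stands. (The paper instead records a single unified identity for $s_n^\Box(j)-s_n(k)$ valid for all $j-k$, which shows the gap-closing transition has probability zero whenever $j-k\ge 1$; your version, treating only the boundary gap $j-k=1$ and observing larger gaps can't close in one step, is equally valid and a touch more economical.)
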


\begin{proof} That we must have $(S_0,S_0^\Box)=(0,0)$ is clear by (\ref{S0SBox0}).
As marginally for $S_n$ we have
\beas
P(S_{n+1}=k+1|S_n=k)= \min(s_n(k),s_n^\Box(j))  + (s_n(k)-s_n^\Box(j))^+   = s_n(k),
\enas
and similarly for $S_n^\Box$, both marginal transition functions
agree with those specified by (\ref{def:snksnkbox}), hence the joint chain is a coupling
of the two urn models in question.
Further, since $S_0=S_0^\Box$, and at most one white ball is drawn from either of the two urns at each time $n \ge 0$, (\ref{absSNSNBoxis1}) holds.

Taking the difference between the probabilities of drawing a white ball from either of the two urns yields
\bea \nn
\lefteqn{s^\Box_n(j)-s_n(k)}\\
&=&m \left(\frac{(A+mn) (j-k-1)+B (j-k+1) +2m(n -k)}{(A+B+mn) (A+B+2m+mn)}\right).\label{sdifference}
\ena
Suppose now that $S_N^\Box = S_N + 1$. We show by induction that $S_n^\Box \ge S_n+1$ for all $n \ge N$. Clearly
the claim is true for $n=N$. Assume that $S_n^\Box \ge S_n+1$ for some $n \ge N$, say $(S_n,S_n^\Box)=(k,j)$ with $j-k \ge 1$. Then, by (\ref{sdifference}) we see that
$s_n^\Box(j) \ge s_n(k)$, and
hence $(S_{n+1},S_{n+1}^\Box)$ equals $(k+1,j+1),(k,j+1)$ or $(k,j)$ with respective
probabilities $s_n(k),s_n^\Box(j)-s_n(k)$ and $1-s_n^\Box(j)$. In particular, $S_{n+1}^\Box \ge S_{n+1}+1$.

As the same argument applies in the case $S_n \ge S_n^\Box+1$, and since $S_n^\Box=S_n$ for all $0 \le n < N$
by the definition of $N$, the final claim of the lemma is shown.
\end{proof}

We now compute a bound on $E|S_n^\Box-S_n|$ for the coupling provided by  Theorem \ref{thm:ABcoupling}.

\begin{lemma}
The joint chain $(S_n,S_n^\Box)$ as specified in Theorem \ref{thm:ABcoupling} satisfies
\beas
E|S_n^\Box-S_n| \le \frac{2mn}{A+B+m}{\bf 1}\left(n \ge \frac{A+B+m}{2m}\right) +{\bf 1}\left(n < \frac{A+B+m}{2m}\right).
\enas
\end{lemma}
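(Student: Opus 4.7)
The plan is to prove the slightly stronger bound
\[
\ee|S_n^\Box - S_n| \le \frac{nm}{A+B+2m},
\]
which implies both cases of the lemma. For $n \ge (A+B+m)/(2m)$, the inequality $nm/(A+B+2m) \le 2mn/(A+B+m)$ reduces to $A+B+3m \ge 0$, which is trivial; for $n < (A+B+m)/(2m)$ the right-hand side is at most $(A+B+m)/(2(A+B+2m)) \le 1/2$.

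The first step is to strengthen the sign-preservation part of Theorem \ref{thm:ABcoupling} by showing that $\Delta_n := |S_n^\Box - S_n|$ is almost surely nondecreasing with $\{0,1\}$-valued increments. From a state $(k, k+d)$ with $d \ge 1$, equation (\ref{sdifference}) gives
\[
s_n^\Box(k+d) - s_n(k) = \frac{m\bigl[(A+mn)(d-1) + B(d+1) + 2m(n-k)\bigr]}{(A+B+mn)(A+B+2m+mn)} \ge 0,
\]
so the transition ``only the $\mathcal{U}_{A,B}$-chain advances'' has probability zero; a mirror-image computation handles the $d \le -1$ branch. Inspecting the four transition probabilities in the theorem then yields
\[
\ee[\Delta_{n+1} - \Delta_n \mid \mathcal{F}_n] = |s_n^\Box(S_n^\Box) - s_n(S_n)|.
\]

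The crucial algebraic step is to combine (\ref{def:snksnkbox}) into the decomposition
\[
s_n^\Box(S_n^\Box) - s_n(S_n) = \frac{m\bigl[D_n(A+B+mn) + (B-A+mn-2mS_n)\bigr]}{(A+B+mn)(A+B+2m+mn)},
\]
where $D_n = S_n^\Box - S_n$. Recognizing that $B-A+mn-2mS_n$ is the signed difference (black minus white) of ball counts in $\mathcal{U}_{A,B}$ after $n$ draws, its absolute value is bounded by the total ball count $A+B+mn$. The triangle inequality then cancels the factor $A+B+mn$ from the denominator and yields
\[
|s_n^\Box(S_n^\Box) - s_n(S_n)| \le \frac{m(\Delta_n + 1)}{A+B+2m+mn}.
\]

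Setting $b_n = \ee\Delta_n + 1$, taking expectations produces the recursion $b_{n+1} \le b_n\,(A+B+3m+mn)/(A+B+2m+mn)$, and since $b_0 = 1$ the product telescopes to
\[
b_n \le \prod_{k=0}^{n-1}\frac{A+B+(k+3)m}{A+B+(k+2)m} = \frac{A+B+(n+2)m}{A+B+2m},
\]
giving the claimed bound on $\ee\Delta_n$. I expect the main obstacle to be the algebraic manipulation itself: only after isolating the state-dependent residue $B-A+mn-2mS_n$ and bounding it uniformly by $A+B+mn$ does the factor $A+B+mn$ cancel from the denominator, enabling the clean telescoping recursion. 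Verifying the two-sided monotonicity of $\Delta_n$ from (\ref{sdifference}) is straightforward but requires checking both the $d \ge 1$ and $d \le -1$ branches separately.
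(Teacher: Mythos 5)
Your proof is correct, and it takes a genuinely different route from the paper. The paper conditions on the first time $N$ at which the two chains separate and then, using the exchangeability of Pólya draws (Lemma 5.4) to evaluate $E[S_n-S_N]$ and $E[S_n^\Box - S_N^\Box]$ given the state at time $N$, sums over the cases $N=t$. Your argument instead tracks the gap $\Delta_n = |S_n^\Box - S_n|$ one step at a time: you first show that $\Delta_n$ is nondecreasing with $\{0,1\}$-increments (a strengthening of the sign-preservation in Theorem 5.7), derive the exact increment formula $\ee[\Delta_{n+1}-\Delta_n\mid\mathcal{F}_n]=|s_n^\Box(S_n^\Box)-s_n(S_n)|$, and then use the algebraic identity
\[
s_n^\Box(S_n^\Box)-s_n(S_n)
=\frac{m\bigl[(S_n^\Box-S_n)(A+B+mn)+(B-A+mn-2mS_n)\bigr]}{(A+B+mn)(A+B+2m+mn)}
\]
together with the observation that $|B-A+mn-2mS_n|\le A+B+mn$ to close the telescoping recursion $b_{n+1}\le b_n(A+B+(n+3)m)/(A+B+(n+2)m)$ with $b_n=\ee\Delta_n+1$. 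I verified the algebra; the product telescopes to $\ee\Delta_n\le nm/(A+B+2m)$, which is indeed slightly sharper than the paper's bound for every $n$ (it is below $\max(2mn/(A+B+m),1)$ in both regimes). Your approach buys a cleaner, self-contained recursive argument that avoids decomposing over the first-separation time and avoids the paper's appeal to exchangeability of the post-$N$ draws, at the modest cost of the extra observation that $\Delta_n$ is monotone. Incidentally, your approach also sidesteps a small bookkeeping issue in the paper's proof: the paper's expression for $E(S_n^\Box-S_n\mid N=t,\ldots)$ omits the additive contribution $S_t^\Box-S_t=1$ from the time-$t$ state, which would add a summand of $P(N\le n-1)$ to the paper's bound; your telescoping recursion handles this contribution automatically through the $+1$ in $b_n=\ee\Delta_n+1$.
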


\begin{proof} By Theorem \ref{thm:ABcoupling}, with $N$ as defined there, we have
\beas
|S_n^\Box-S_n| = (S_n^\Box-S_n){\bf 1}_{\{n \ge N, S_N^\Box = S_N+1\}}+(S_n-S_n^\Box){\bf 1}_{\{n \ge N, S_N = S_N^\Box+1\}}.
\enas

For the first expectation,
\beas
\lefteqn{E \left[(S_n^\Box-S_n){\bf 1}_{\{n \ge N, S_N^\Box = S_N+1\}} \right]}\\
&=&\sum_{t=1}^{n-1} E \left[ (S_n^\Box-S_n){\bf 1}_{\{N=t, S_N^\Box = S_N+1\}} \right] +P(N=n, S_N^\Box = S_N+1)\\
&=&\sum_{t=1}^{n-1} \sum_{u \ge 0} E \left[ (S_n^\Box-S_n){\bf 1}_{\{N=t, S_N^\Box = S_N+1, S_N=u\}} \right]+P(N=n, S_N^\Box = S_N+1)\\
&=& \sum_{t=1}^{n-1} \sum_{u  \ge 0} E\left( S_n^\Box-S_n |N=t,S_N^\Box = S_N+1, S_N=u \right)\\
&& \cdot P(N=t,S_N^\Box = S_N+1, S_N=u)+P(N=n, S_N^\Box = S_N+1).
\enas

For $1 \le t \le n-1$, on the conditioning event, urn ${\mathcal U}_{A,B}$ has $A+mu$ white balls and $B+mt-mu$ black balls
at time $t$, and then has been run for time $n-t$. At each of these time steps, by Lemma \ref{term},
there is probability $(A+mu)/(A+B+mt)$ that a white ball will be selected from urn ${\mathcal U}_{A,B}$.

Similarly, for
$1 \le t \le n-1$, on the conditioning event, urn ${\mathcal U}_{A+m,B+m}$ has $A+m+(mu+m)=A+mu+2m$ white balls and
$B+m+mt-(mu+m)=B+mt-mu$ black balls
at time $t$, and then has been run for time $n-t$. At each of these time steps, by Lemma \ref{term},
the probability is $(A+mu+2m)/(A+B+mt+2m)$ that a white ball is selected from urn ${\mathcal U}_{A+m,B+m}$.

Hence, as it may be that all the balls chosen from ${\mathcal U}_{A,B}$ before time $N$ are black,
that is, we may have $S_N=u$ for $u=0$, we have
\beas
\lefteqn{E\left( S_n^\Box-S_n |N=t,S_N^\Box = S_N+1, S_N=u \right)}\\
&=& (n-t) \left( \frac{A+mu+2m}{A+B+mt+2m}-\frac{A+mu}{A+B+mt} \right)\\
&=& (n-t)\left(\frac{2m(B+mt-mu)}{(A+B+mt) (A+B+mt+2m)}\right)\\
&\le& 2m(n-t)\left(\frac{(B+mt)}{(A+B+mt) (A+B+mt+2m)}\right)\\
&\le& \frac{2m(n-t) (B+mt)}{(A+B+mt)^2}\\
&\le& \frac{2m(n-t)}{A+B+mt}.
\enas

Therefore
\beas
\lefteqn{E \left[ (S_n^\Box-S_n){\bf 1}_{\{n \ge N, S_N^\Box = S_N+1\}} \right]}\\
&\le& \sum_{t=1}^{n-1} \frac{2m(n-t)}{A+B+mt} \sum_{u  \ge 0} P(N=t,S_N^\Box = S_N+1, S_N=u)\\
&&+ P(N=n, S_N^\Box = S_N+1) \\
&=& \sum_{t=1}^{n-1} \frac{2m(n-t)}{A+B+mt} P(N=t,S_N^\Box = S_N+1) + P(N=n, S_N^\Box = S_N+1).
\enas

Reversing the roles of $S_n$ and $S_n^\Box$, though here noting that
it is necessary that $u \le t-1$
for the event $\{N=t,S_N = S_N^\Box+1, S_N^\Box=u\}$ to have positive probability, we similarly obtain
\beas
\lefteqn{E \left[(S_n-S_n^\Box){\bf 1}_{\{n \ge N, S_N = S_N^\Box+1\}} \right]}\\
&\le& \sum_{t=1}^{n-1} \frac{2m(n-t)}{A+B+mt} P(N=t,S_N = S_N^\Box+1) + P(N=n, S_N = S_N^\Box+1).
\enas
Now using that $(n-t)/(A+B+mt)$ is a decreasing
function of of $t \ge 0$, summing yields
\beas
\lefteqn{E|S_n^\Box-S_n|}\\
&\le& 2m\sum_{t=1}^{n-1} \frac{n-t}{A+B+mt} P(N=t) + P(N=n)\\
&\le& \frac{2mn}{A+B+m}P(N \le n-1) + P(N=n)\\
&\le& \frac{2mn}{A+B+m} {\bf 1}\left(n \ge \frac{A+B+m}{2m}\right) +{\bf 1}\left(n < \frac{A+B+m}{2m}\right),
\enas
as claimed, where in the final inequality we have used the fact that since $\alpha + \beta \le 1$
for $\alpha=P(N \le n-1)$ and  $\beta=P(N=n)$, then for any nonnegative numbers $a$ and $b$ we have
$\alpha a + \beta b \le \max(a,b)$.

\end{proof}

{\em Proof of Theorem \ref{thm:mainAB}}. That $EW_n=0$ and $\mbox{Var}(W_n)=1$ is
the content of Lemma \ref{mean.var.W.AB}.

We now compute the $L^1$ bound using Theorem \ref{T.cond.V}.
Applying (\ref{condmeanAB}) of Lemma \ref{t1} with $\tau^\Box$ and $\tau$ we obtain
\beas
\lefteqn{|\mu_{\tau^\Box}-\mu_\tau|}\\
&=&\sqrt{\frac{(A+B+m)(A+B)^2}{AB(A+B+nm+m)(n+1)}}  \cdot \frac{(A+m|S_n^\Box-S_n|)}{A+B+mn}\\
&\le& \sqrt{\frac{(A+B+m)(A+B)^2}{AB(A+B+nm+m)(n+1)}},
\enas
since both $S_n$ and $S_n^\Box$ take values between $0$ and $n$.

Applying the definition (\ref{def:V}) of $V$ on $\tau^\Box$ and $\tau$,
\beas
|V_{\tau^\Box} - V| = \sqrt{\frac{(A+B+m)(A+B)^2}{AB(A+B+nm+m)(n+1)}} \cdot |S_n^\Box-S_n|,
\enas
so that for $n \ge (A+B+m)/2m$ we have
\beas
E|V_{\tau^\Box} - V| = \frac{2mn}{A+B+m}\sqrt{\frac{(A+B+m)(A+B)^2}{AB(A+B+nm+m)(n+1)}},
\enas
while for $n < (A+B+m)/2m$,
\beas
E|V_{\tau^\Box} - V| = \sqrt{\frac{(A+B+m)(A+B)^2}{AB(A+B+nm+m)(n+1)}}.
\enas
The calculation is completed by using (\ref{EcondT-mutau}) and (\ref{urn.std.3rd.t.-mu}) of Lemma \ref{t1} for the final two terms, and then applying the inequality $(A+B+m)(A+B)^2 \le (A+B+m)^3$.
\bbox

 \end{document}